\DeclareMathOperator{\arcsinh}{arcsinh}
\newcommand{\bel}[1]{\begin{equation}\label{#1}}
\newcommand{\be}{\begin{equation}}
\newcommand{\ba}{\begin{eqnarray}}
\newcommand{\ea}{\end{eqnarray}}
\newcommand{\qe}{\end{equation}}
\newcommand{\Hmm}[1]{\leavevmode{\marginpar{\tiny%
$\hbox to 0mm{\hspace*{-0.5mm}$\leftarrow$\hss}%
\vcenter{\vrule depth 0.1mm height 0.1mm width \the\marginparwidth}%
\hbox to
0mm{\hss$\rightarrow$\hspace*{-0.5mm}}$\\\relax\raggedright #1}}}
\newtheorem{theorem}{Theorem}[section]
\newtheorem{lemma}[theorem]{Lemma}
\newtheorem{corollary}[theorem]{Corollary}
\newtheorem{definition}[theorem]{Definition}
\newtheorem{remark}[theorem]{Remark}
\newtheorem{prop}[theorem]{Proposition}
\newcommand{\tm}{\begin{theorem}}
\newcommand{\tmd}{\end{theorem}}
\newcommand{\co}{\begin{corollary}}
\newcommand{\cod}{\end{corollary}}
\newcommand{\prp}{\begin{prop}}
\newcommand{\prpd}{\end{prop}}
\begin{document}

\title[Eigenvalue estimates for the fractional Laplacian]{Eigenvalue estimates for the fractional Laplacian on lattice subgraphs}

\author{Jiaxuan Wang}
\address{Jiaxuan Wang: School of Mathematical Sciences, Fudan University, Shanghai 200433, China}
\email{jiaxuanwang21@m.fudan.edu.cn}


\begin{abstract}


We introduce the the fractional Laplacian on a subgraph of a graph with Dirichlet boundary condition. For a lattice graph, we prove the upper and lower estimates for the sum of the first $k$ Dirichlet eigenvalues of the fractional Laplacian, extending the classical results by Li-Yau and Kr\"{o}ger.

\end{abstract}
\maketitle

Mathematics Subject Classification 2010: 05C63, 35J91, 35P15.


\par
\maketitle

\bigskip


\section{Introduction}
The fractional Laplacian has significant applications across various research fields in mathematics. The properties and applications of this operator in $\mathbb{R}^n$ are increasingly discussed in literature, e.g. \cite{2016Eigenvalue,2015FRACTIONAL,2015Ten,2016The,2015The,2010Heat}. 
In this paper, we introduce the fractional Laplacian on a locally finite connected graph and define the fractional Laplacian operator with Dirichlet boundary condition on a finite subgraph. For an integer lattice graph, we give the well-defined fractional Laplacian, and derive the upper and lower estimates for the Dirichlet eigenvalues of the fractional Laplacian.

In a bounded domain $\Omega \subset \mathbb{R}^d$, we consider the Dirichlet eigenvalue problem as follows:
\begin{equation*}
\left\{
\begin{aligned}
& L u +\lambda u = 0 \quad\quad in\ \Omega,\\
& u|_{\partial\Omega} =0.\\
\end{aligned}
\right.
\end{equation*}
Let $L $ be the Laplacian, and we can order the eigenvalues as
$$0<\lambda_1(\Omega)<\lambda_2(\Omega)\leqslant\ldots\leqslant\lambda_k(\Omega)\leqslant\ldots.$$
Let $V(\Omega)$ be the volume of the domain $\Omega$ and $V_d$ be the volume of the unit ball in $\mathbb{R}^d$. In 1912, Weyl \cite{cite-key} proved that as $k\rightarrow\infty$,
$$\lambda_k(\Omega) \sim C_d(\frac{k}{V(\Omega)})^{\frac{2}{d}},$$
where $C_d = (2\pi)^2V_d^{-\frac{2}{d}}$. For any planar domain $\Omega$ that tiles $\mathbb{R}^2$, P\'{o}lya \cite{G1961ON} provided the lower bound estimate:
$$\lambda_k(\Omega) \geqslant C_d(\frac{k}{V(\Omega)})^{\frac{2}{d}}.$$
His proof is applicable in any dimension $d\geqslant 2$, and he conjectured that this estimate holds for all bounded domains in $\mathbb{R}^d$. To address this conjecture, Lieb \cite{1979The} obtained the result:
$$\lambda_k(\Omega) \geqslant D_d(\frac{k}{V(\Omega)})^{\frac{2}{d}}.$$
Here, the constant $D_d < C_d$ is proportional to $C_d$. Later in 1983, Li-Yau \cite{1983On} derived the sharp estimate known as the Berezin-Li-Yau inequality:
$$\frac{1}{k}\sum_{i=1}^k \lambda_i(\Omega) \geqslant \frac{d}{d+2}C_d(\frac{k}{V(\Omega)})^{\frac{2}{d}}.$$
 The upper bound estimate was proved by Kr\"{o}ger \cite{1994Estimates} for large $k$ in 1994, which includes an extra term depending on the geometry of $\Omega$.

For the fractional Laplacian $(-\Delta)^s$, $0<s< 1$, we denote by $\lambda_{s,i}(\Omega)$ the $i$-th eigenvalue of $(-\Delta)^s$. The Berezin-Li-Yau type inequality was generalized in \cite{2013ESTIMATES} as:
$$\frac{1}{k}\sum_{i=1}^k \lambda_{s,i}(\Omega) \geqslant \frac{d}{d+2s}C_{d}^{s}(\frac{k}{V(\Omega)})^{\frac{2s}{d}}.$$

In recent years, there has been increasing attention on the analysis of graphs. The graph Laplacian has been extensively studied in the literature, e.g. \cite{1990The,2000Graph,2013Harmonic,BL21}. Some results also exist for the fractional graph Laplacian, such as  \cite{2022Optimal,2022Spectral,Benzi2020,21Decay}. However, in these works, the fractional Laplacian is typically defined on the entire graph, without reference to the Dirichlet eigenvalues of this operator. In this paper, for a locally finite, simple, undirected, connected graph $G$, we first introduce the definition of the fractional Laplacian on a subgraph with Dirichlet boundary condition, and study the Dirichlet eigenvalues.

We recall the setting of graphs. A locally finite, simple, undirected and connected graph $G=(V,E)$ consists of the set of vertices $V$ and the set of edges $E$.  We write $x\in G$ instead of $x\in V$ and denote by $|G|$ the number of vertices in $G$.
Two vertices $x, y$ are called neighbors, denoted by $x\sim y$, if there exists $e \in E$ connecting $x$ and $y$. The graph distance on $G$ is defined by
$$d(x,y):=\inf\{n|x=z_0\sim\dots\sim z_n=y\},$$ 
and we define balls $B(x,r)=\{y\in G: d(x,y)\leqslant r\}$ on $G$. We denote by $V(x,r)=V(B(x,r))$ the number of vertices in $B(x,r)$.
A subgraph $G_1=(V_1, E_1)$ of $G$ means $V_1\subset V$ and $E_1\subset E$. In this paper, we always consider locally finite, simple, undirected and connected graphs.

Recall that the graph Laplacian is defined as, for any $f:G \rightarrow \mathbb{C}$,
$$\Delta f(x) = \sum_{y\sim x}(f(y)-f(x)).$$
Let $(-\Delta)^{\frac{\alpha}{2}}$ be the fractional Laplacian operator, $0<\alpha< 2$. Here we use the equivalent definition (Bochner's definition) in \cite{2015Ten} to describe the fractional Laplacian on $G$:
$$(-\Delta)^{\frac{\alpha}{2}}u(x) = \frac{1}{|\Gamma(-\frac{\alpha}{2})|}\int^{\infty}_0 (u(x)-e^{t\Delta}u(x))t^{-1-\frac{\alpha}{2}}dt,$$
where $u:G \rightarrow \mathbb{C}$, and $e^{t\Delta}$ denotes the heat semigroup, see Definition~\ref{def:b}.

Given a finite subgraph $G_1\subsetneq G$, we introduce the fractional Laplacian operator with Dirichlet boundary condition, denoted by $L^\alpha_{G_1}$, which is defined as:
$$L^\alpha_{G_1} u(x) = (-\Delta)^{\frac{\alpha}{2}} u^{*}(x)|_{G_1}.$$
Here $u:G_1 \rightarrow \mathbb{C}$, and $u^*:G \rightarrow \mathbb{C}$ is the zero extension of $u$. One readily sees that $L^\alpha_{G_1}$ is a real symmetric operator on $G_1$. The Dirichlet eigenvalue problem is given by
$$ L^\alpha_{G_1} u(x) =\lambda u(x).$$
There are $|G_1|$ solutions and corresponding real eigenvalues:
$$0<\lambda_1(G_1)< \lambda_2(G_1)\leqslant\ldots\leqslant\lambda_{|G_1|}(G_1).$$
See Section~\ref{2} for more properties about this operator. It is important to note that on the graph $G$, the eigenvalues of $(-\Delta)^{\frac{\alpha}{2}}$ are actually $\lambda^{\frac{\alpha}{2}}$,  where $\lambda$ is an eigenvalue of $-\Delta$ on $G$. However, the Dirichlet eigenvalues of the fractional Laplacian and the graph Laplacian don't follow this relationship. Thus, it is meaningful to study the estimates for the Dirichlet eigenvalues of the fractional Laplacian.


Consider the infinite d-dimensional integer lattice $\mathbb{Z}^d$ as a graph. We study the fractional Laplacian on a finite subgraph $\Omega$ of $\mathbb{Z}^d$ using the Fourier transform. In 2021, Bauer and Lippner \cite{BL21} studied the graph Laplacian eigenvalues with Dirichlet boundary condition on $\Omega$. For the average of the first k Dirichlet eigenvalues, they derived the upper estimate in the spirit of Kr\"{o}ger \cite{1994Estimates} and the lower estimate in the spirit of Li-Yau \cite{1983On}. In this paper, we follow the proof strategies in \cite{1994Estimates,1983On,BL21} to derive the upper and lower bounds for the Dirichlet eigenvalues of the fractional Laplacian.

For the fractional Laplacian on $\mathbb{Z}^d$, we prove that it is well-defined if $u$ satisfies
$$\sum_{y\neq x}\frac{|u(x)-u(y)|}{d(x,y)^{d+\alpha}}<+\infty$$ 
for any vertex $x\in \mathbb{Z}^d$. This result is similar to the continuous case. For a finite subgraph $\Omega \subset \mathbb{Z}^d$, we define a boundary term
$$|\partial^\alpha \Omega| = \sum_{\substack{x\in \Omega^c\\ y\in\Omega}}Q_\alpha (x,y) < +\infty.$$
This could be regarded as the size of the boundary associated with $L^\alpha_{\Omega}$, where 
$$Q_\alpha (x,y) = \frac{1}{|\Gamma(-\frac{\alpha}{2})|}\int^{\infty}_0 t^{-1-\frac{\alpha}{2}}p(t,x,y)dt,$$ 
and $p(t,x,y)$ is the heat kernel on $\mathbb{Z}^d$. Moreover, we prove the order estimates of $Q_\alpha(x,y)$, see Definition~\ref{def:b} and Theorem~\ref{fra}. The following are the main results of this paper.

\tm\label{thm:main1} Let $\Omega$ be a finite subgraph of $\mathbb{Z}^d$, and the operator $L^\alpha_{\Omega}$ be the fractional Laplacian $(-\Delta)^{\frac{\alpha}{2}}$ with Dirichlet boundary condition, $0<\alpha< 2$. Then

$(a):$  for all $1\leqslant k\leqslant\min\{1,\frac{V_d}{2^d}\}|\Omega|$, we have the upper bound estimate
$$ \frac{1}{k}\sum_{i=1}^k \lambda_i(\Omega) \leqslant (2\pi)^\alpha\frac{d}{d+\alpha}(\frac{k}{V_d|\Omega|})^{\frac{\alpha}{d}}+\frac{|\partial^\alpha \Omega|}{|\Omega|}.$$
For $1\leqslant k\leqslant\min\{1,\frac{V_d}{2^{d+1}}\}|\Omega|$, there holds
$$ \lambda_{k+1}(\Omega) \leqslant (2\pi)^\alpha\frac{d\cdot2^{\frac{d+\alpha}{d}}}{d+\alpha}(\frac{k}{V_d|\Omega|})^{\frac{\alpha}{d}}+2\frac{|\partial^\alpha \Omega|}{|\Omega|}.$$

$(b):$ For all $1\leqslant k\leqslant\min\{1,(\frac{2^{1-\frac{1}{\alpha}}\sqrt{3}}{2\pi})^d V_d\}|\Omega|$, the lower bound estimate is given as
 \begin{align*}
 \lambda_k(\Omega)\geqslant \frac{1}{k}\sum_{i=1}^k \lambda_i(\Omega)
 \geqslant (2\pi)^\alpha\frac{d}{d+\alpha}(\frac{k}{V_d|\Omega|})^{\frac{\alpha}{d}}-(2\pi)^{2\alpha}(\frac{1}{12})^{\frac{\alpha}{2}}\frac{d}{d+2\alpha}(\frac{k}{V_d|\Omega|})^
 {\frac{2\alpha}{d}}.
\end{align*}
\tmd

The proof strategy is as follows. The upper bound estimate follows from the Rayleigh-Ritz formula. Let $\{\phi_j\}_{1\leqslant j\leqslant |\Omega|}$ be the corresponding eigenfunctions, $V_k$ be the subspace of $\mathbb{C}^\Omega$ spanned by $\phi_1,\ldots,\phi_k$ and $P_k$ be the orthogonal projection to $V_k$. First by the Rayleigh quotient, we write $\lambda_{k+1}= \inf_{0\neq w\perp V_k}\frac{\langle w,L^\alpha_{\Omega}w\rangle _{\Omega}}{\langle w,w\rangle _{\Omega}}$. For any $g:\Omega\rightarrow\mathbb{C}$, set $g= P_k g+g-P_k g$ and we have Lemma~\ref{lm4}. With the Fourier transform, we choose $g(x)=h_z(x) = e^{i\langle x,z\rangle}$ and integrate in a measurable subset $B$ of $[-\pi,\pi]^d$ to prove Lemma~\ref{lm5}. Finally, we prove $(a)$ by choosing proper set $B$. 

For the lower bound estimate, the key is to prove an integral inequality (Lemma~\ref{lm6}) which is an adaption of Li and Yau's method. To prove this lemma, we construct a radially symmetric function $\varphi(z):[-\pi,\pi]^d\rightarrow\mathbb{C}$ which satisfies $0\leqslant \varphi(z)\leqslant (\Phi(z))^{\frac{\alpha}{2}}$ and
$$\int_{[-\pi,\pi]^d}(\Phi(z))^{\frac{\alpha}{2}}F(z)dz \geqslant \int_{[-\pi,\pi]^d}\varphi(z)F(z)dz.$$
One easily sees that $\widetilde{F}=M 1_{B_R}$ (see Lemma~\ref{lm6}) minimizes the integral $\int_{[-\pi,\pi]^d}\varphi(z)F(z)dz$, which gives the lower bound of $\int_{[-\pi,\pi]^d}(\Phi(z))^{\frac{\alpha}{2}}F(z)dz$. By choosing $F(z) = \sum_{j=1}^k|\langle \phi_j, h_z\rangle_\Omega|^2$, $(b)$ is derived from this lemma.

Note that on $\Omega$, compared to the Laplacian, the fractional Laplacian is a global operator, whereas the Laplacian is a local operator. Recall that
$$\overline{\Omega} = \Omega \cup \{x\in \mathbb{Z}^d |  \exists y\in \Omega :y\sim x\},\quad\partial \Omega = \overline{\Omega}\setminus \Omega.$$
Here we define a new boundary term $|\partial^\alpha \Omega|$, which depends on all vertices outside $\Omega$, different from the finite sum term $|\partial \Omega|$. In addition, the leading term of our estimates is
$$(2\pi)^\alpha\frac{d}{d+\alpha}(\frac{k}{V_d|\Omega|})^{\frac{\alpha}{d}},$$
which is the same as \cite{2013ESTIMATES}. Letting $\alpha\rightarrow2$, we know the operator tends to the Laplacian with Dirichlet boundary condition. The leading term is consistent with \cite{1983On,1994Estimates}, and our results actually are generalizations of \cite{BL21}.

The paper is organized as follows:
In next section, we introduce the fractional Laplacian on a locally finite connected graph $G$, including a well-definition on $\mathbb{Z}^d$, and define the Dirichlet eigenvalue problem on a finite subgraph. In Section~\ref{sec:pro}, we prove Theorem~\ref{thm:main1}.

\textbf{Acknowledgements.} The author would like to thank Ruowei Li, Fengwen Han, Feng Zhou, Zuoqin Wang for helpful discussions, and thank B. Hua for useful guidance and suggestions. J.X. Wang is supported by Shanghai Science and Technology Program [Project No. 22JC1400100].

\section{Preliminaries}\label{0}

\subsection{The fractional Laplacian on graphs}\label{1} \

\

Let $G=(V,E)$ be a locally finite, simple, undirected and connected graph. The Laplacian on $G$ is defined as
$$\Delta u(x) = \sum_{y\sim x}(u(y)-u(x)),$$
where $u\in C(G):=\{f: V(G)\rightarrow \mathbb{C}\}$, and clearly $\Delta$ is a local operator. We know the heat equation on $G$ is given as
\begin{equation}
\left\{
\begin{aligned}
& \frac{\partial}{\partial t}v(t,x) = \Delta v(t,x) \quad\quad in \ (0,+\infty) \times G, \\
& v(0,x) = u(x)  \quad\quad on \ G. \\
\end{aligned}
\right.
\end{equation}
We denote the solution $v(t,x)$ of (1) by $e^{t\Delta}u(x)$, and the heat kernel by $p(t,x,y)$. We have
$$e^{t\Delta}u(x) = \sum_{y\in G} p(t,x,y)u(y).$$
Here we recall some fundamental properties as follows.

\begin{prop}[\cite{2017Volume,2008Heat}]\label{pro:a}
For $t>0$, and any $x, y\in G$, we have
\begin{enumerate}[(a)]
\item $p(t,x,y)=p(t,y,x)$.
\item $p(t,x,y)>0$.
\item $\sum_{y\in G}p(t,x,y)\leqslant1$. The graph $G$ is stochastically complete if \  $\sum_{y\in G}p(t,x,y)=1$.
\item $\partial_t p(t,x,y)= \Delta_x p(t,x,y)=\Delta_y p(t,x,y)$.
\item $\lim_{t\rightarrow 0}p(t,x,y)=1_{\{x= y\}}$.
\end{enumerate} 
\end{prop}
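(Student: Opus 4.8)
The plan is to construct $p(t,x,y)$ as the \emph{minimal} heat kernel, obtained by exhausting $G$ by finite connected subgraphs and taking a monotone limit, and then to verify the five properties first on the finite models, where everything reduces to matrix analysis, and finally to pass each property to the limit. Fix an exhaustion $G_1 \subset G_2 \subset \cdots$ of $G$ by finite connected subgraphs with $\bigcup_n G_n = V$, which exists because $G$ is locally finite and connected. On each $G_n$ let $\Delta_n u := \Delta u^{*}|_{G_n}$ be the Dirichlet Laplacian, where $u^{*}$ is the zero extension of $u : G_n \to \mathbb{C}$, exactly as in the paper's definition of $L^\alpha_{G_1}$; as a matrix indexed by $G_n$ it is real symmetric, with off-diagonal entry $1$ for neighbouring vertices and diagonal entry $-\deg_G(x)$. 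Hence $p_n(t,x,y) := (e^{t\Delta_n})_{xy}$ is symmetric and smooth in $t$, which already gives (a) at the finite level.

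For positivity, write $\Delta_n = A_n - D_n$ with $A_n \ge 0$ the adjacency part and $D_n$ the diagonal degree part, and choose $c \ge \max_{x \in G_n} \deg_G(x)$ so that $\Delta_n + cI$ has nonnegative entries. Then $e^{t\Delta_n} = e^{-ct}\sum_{k\ge 0} \frac{t^k}{k!}(\Delta_n+cI)^k$ has nonnegative entries, and since $G_n$ is connected, $\Delta_n + cI$ is irreducible, so these entries are in fact strictly positive for $t > 0$; this is (b) at the finite level. For sub-stochasticity, note $\Delta_n \mathbf{1}(x) = \deg_{G_n}(x) - \deg_G(x) \le 0$, and Duhamel's formula $e^{t\Delta_n}\mathbf{1} - \mathbf{1} = \int_0^t e^{s\Delta_n}(\Delta_n \mathbf{1})\,ds$, combined with the positivity preservation just proved, forces $e^{t\Delta_n}\mathbf{1} \le \mathbf{1}$, i.e. $\sum_{y \in G_n} p_n(t,x,y) \le 1$; in particular $0 \le p_n \le 1$.

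The bridge between the finite models and $G$ is domain monotonicity: for $x,y \in G_n$ one has $p_n(t,x,y) \le p_{n+1}(t,x,y)$. I would prove this by a parabolic maximum principle applied to $u(t,x) = p_{n+1}(t,x,y) - p_n(t,x,y)$ on $G_n$, which vanishes at $t=0$ and satisfies $\partial_t u = \Delta_n u + R$ with a source term $R \ge 0$ arising from the values of $p_{n+1}$ on the vertices neighbouring $G_n$ (nonnegative by (b)). Monotonicity together with the uniform bound $p_n \le 1$ shows that $p(t,x,y) := \lim_n p_n(t,x,y) \in [0,1]$ exists. Symmetry (a) passes to the limit trivially; positivity (b) holds since $p \ge p_n > 0$ once $x,y \in G_n$; and sub-stochasticity (c) follows from monotone convergence, $\sum_{y\in G} p(t,x,y) = \lim_n \sum_{y \in G_n} p_n(t,x,y) \le 1$, with stochastic completeness being by definition the case of equality.

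It remains to obtain (d) and (e). For the heat equation I would show that $p_n(\cdot,\cdot,y) \to p(\cdot,\cdot,y)$ locally uniformly in $t$ and exploit the locality of $\Delta$ (finite sums over neighbours) to interchange the limit with $\partial_t$ and with $\Delta_x$; since each $p_n$ solves $\partial_t p_n = \Delta_n p_n = \Delta_x p_n$ at interior vertices, the limit satisfies $\partial_t p(t,x,y) = \Delta_x p(t,x,y)$, and the identity with $\Delta_y$ follows by applying this to $p(t,y,x)$ and invoking symmetry (a). Finally, for (e), fixing $x$ and any $n$ with $x \in G_n$ gives $\liminf_{t\to 0} p(t,x,x) \ge \lim_{t\to 0} p_n(t,x,x) = 1$, while $p(t,x,x) \le \sum_y p(t,x,y) \le 1$ forces $\lim_{t\to 0} p(t,x,x) = 1$; for $y \ne x$ we then get $0 \le p(t,x,y) \le \sum_{z \ne x} p(t,x,z) \le 1 - p(t,x,x) \to 0$, which is exactly $\lim_{t\to 0} p(t,x,y) = 1_{\{x=y\}}$. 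The step I expect to be most delicate is the passage to the limit in (d), where justifying that the minimal kernel genuinely solves the heat equation needs the local-uniform control coming from domain monotonicity; the remaining properties all reduce to elementary facts about nonnegative irreducible matrices and the maximum principle.
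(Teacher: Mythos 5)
The paper offers no proof of this proposition at all: it is stated as a quotation from the cited references on heat kernels of graphs, so there is nothing internal to compare your argument against. What you have written is, in substance, the standard construction from that literature: build the minimal heat kernel as the increasing limit of the Dirichlet heat kernels $p_n = e^{t\Delta_n}$ of a finite exhaustion, verify the five properties at the finite level by matrix analysis (symmetry of $\Delta_n$, positivity via $e^{t\Delta_n}=e^{-ct}e^{t(\Delta_n+cI)}$ with $\Delta_n+cI\geqslant 0$ irreducible, sub\mbox{-}stochasticity via Duhamel and $\Delta_n\mathbf{1}\leqslant 0$), and pass to the limit using domain monotonicity. Your reductions are all sound, including the less routine ones: the source term $R(t,x)=\sum_{z\sim x,\ z\in G_{n+1}\setminus G_n}p_{n+1}(t,z,y)\geqslant 0$ in the monotonicity step, and the derivation of (e) for $y\neq x$ from $p(t,x,y)\leqslant 1-p(t,x,x)$ together with on-diagonal convergence. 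The one step you correctly flag as delicate, passing the heat equation (d) to the limit, is most cleanly closed by noting the uniform Lipschitz bound $|\partial_t p_n(t,x,y)|=|\Delta_n p_n(t,x,y)|\leqslant 2\deg_G(x)$, writing $p_n(t,x,y)=p_n(t_0,x,y)+\int_{t_0}^{t}\Delta_x p_n(s,x,y)\,ds$ for $n$ large enough that all neighbours of $x$ lie in $G_n$, and letting $n\to\infty$ by dominated convergence on the finite sum defining $\Delta_x$; the fundamental theorem of calculus then yields $\partial_t p=\Delta_x p$, and $\Delta_y$ follows from symmetry as you say. With that detail supplied your proposal is a complete and correct proof of the proposition, more self-contained than the paper, which delegates it entirely to its references.
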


We always consider stochastically complete graphs. With these properties, we provide the definition of the fractional Laplacian.

\begin{definition}\label{def:b}
For a graph $G = (V,E)$, $0<\alpha< 2$,  the fractional Laplacian is defined as:
\begin{equation}
\begin{aligned}
(-\Delta)^{\frac{\alpha}{2}}u(x) = \frac{1}{|\Gamma(-\frac{\alpha}{2})|}\int^{\infty}_0 (u(x)-e^{t\Delta}u(x))t^{-1-\frac{\alpha}{2}}dt.\\
\end{aligned}
\end{equation}
If the function $u$ satisfies the conditions that
 $\sum_{\substack{y\in G\\ y\neq x}}p(t,x,y)(u(x)-u(y))t^{-1-\frac{\alpha}{2}}$
uniformly converges and
$$|\sum_{\substack{y\in G\\ y\neq x}}p(t,x,y)(u(x)-u(y))t^{-1-\frac{\alpha}{2}}|\leqslant F(t,x)\in L^{1}_t,$$
the fractional Laplacian is well-defined and can be written as:
$$(-\Delta)^{\frac{\alpha}{2}}u(x)= \sum_{\substack{y\in G\\ y\neq x}} Q_\alpha(x,y)(u(x)-u(y)),$$
where for $x\neq y$,
$$0< Q_\alpha(x,y)= \frac{1}{|\Gamma(-\frac{\alpha}{2})|}\int^{\infty}_0 t^{-1-\frac{\alpha}{2}}p(t,x,y)dt <+\infty,$$
and $\sum_{\substack{y\in G\\ y\neq x}}Q_\alpha(x,y)<+\infty$.

\end{definition}

\begin{proof}
	Using Lebesgue's dominated convergence theorem, we have
	\begin{equation*}
		\begin{aligned}
			(-\Delta)^{\frac{\alpha}{2}}u(x) &= \frac{1}{|\Gamma(-\frac{\alpha}{2})|}\int^{\infty}_0 (u(x)-e^{t\Delta}u(x))t^{-1-\frac{\alpha}{2}}dt\\
			&= \frac{1}{|\Gamma(-\frac{\alpha}{2})|}\int^{\infty}_0 (u(x)-\sum_{y\in G}p(t,x,y)u(y))t^{-1-\frac{\alpha}{2}}dt\\
			&= \frac{1}{|\Gamma(-\frac{\alpha}{2})|}\int^{\infty}_0 \sum_{\substack{y\in G\\ y\neq x}}p(t,x,y)(u(x)-u(y))t^{-1-\frac{\alpha}{2}}dt\\
			&= \sum_{\substack{y\in G\\ y\neq x}} Q_\alpha(x,y)(u(x)-u(y)),
		\end{aligned}
	\end{equation*}
	which converges by the conditions.
	
	By Proposition~\ref{pro:a}, it is easy to see $Q_\alpha(x,y)>0$. Using the fact that $0<\sum_{\substack{y\in G\\ y\neq x}}p(t,x,y)\leqslant1$, and
	\begin{equation*}
		\begin{aligned}
	&|\sum_{\substack{y\in G\\ y\neq x}}p(t,x,y)|=|1-p(t,x,x)|=|p(0,x,x)-p(t,x,x)| \\
	= &|t\cdot \partial_t p(\xi,x,x)|=|t\cdot \Delta p(\xi,x,x)|\leqslant ct,
			\end{aligned}
\end{equation*}
	we know $\sum_{\substack{y\in G\\ y\neq x}}Q_\alpha(x,y)<+\infty$. 
\end{proof}

\begin{remark}
	At least, for $u\in l^\infty(G)$, the fractional Laplacian is well-defined.
\end{remark}

On the lattice graph $\mathbb{Z}^d$, we can derive the order estimates of $Q_\alpha(x,y)$. Furthermore, we prove a corollary which gives a suitable domain of definition of the fractional Laplacian. 

\tm\label{fra}
For any $x\neq y$ on $\mathbb{Z}^d$, we have the following order estimates:
$$C_1 d(x,y)^{-d-\alpha}\leqslant Q_\alpha(x,y) \leqslant C_2 d(x,y)^{-d-\alpha},$$
where $C_1,C_2$ are constants only depending on $\alpha$ and the dimension $d$ .
\tmd

To prove this theorem, we need two lemmas which provide the heat kernel estimates. The first lemma in \cite{davies1993large,bauer2017sharp} gives a upper bound estimate for the heat kernel $p(t,x,y)$ for any $t>0$. The second lemma comes from \cite{bauer2015li,horn2019volume}, which proves that $\mathbb{Z}^d$ satisfies the Gaussian heat kernel property for $t\geqslant d(x,y)$, and this result is crucial for us to calculate the order. 

\begin{lemma}[\cite{davies1993large,bauer2017sharp}]\label{heat1}
		
		On $\mathbb{Z}^d$, we have
	$$p(t,x,y)\leqslant e^{-\zeta_1(t,d(x,y))},$$
	where $t>0$ and
	$$\zeta_1(t,r)=r\arcsinh \frac{r}{t}-\sqrt{r^2+t^2}+t.$$
\end{lemma}

\begin{lemma}[\cite{bauer2015li,horn2019volume}]\label{heat2}
	
	The lattice graph $\mathbb{Z}^d$ satisfies the Gaussian heat kernel property if $t\geqslant d(x,y)$ implies
	$$\frac{c_1}{V(x,\sqrt{t})}e^{-c_2\frac{d(x,y)^2}{t}}\leqslant p(t,x,y)\leqslant \frac{c_3}{V(x,\sqrt{t})}e^{-c_4\frac{d(x,y)^2}{t}},$$
	where $c_1,c_2,c_3,c_4$ are constants only depending on the dimension $d$.
\end{lemma}

\begin{proof}[Proof of Theorem~\ref{fra}]

For convenience, we write $D=d(x,y)$ for calculation, and we use uniform constants $C,C'$ only depending on $\alpha$ and the dimension $d$. For 
$$Q_\alpha(x,y)= \frac{1}{|\Gamma(-\frac{\alpha}{2})|}\int^{\infty}_0 t^{-1-\frac{\alpha}{2}}p(t,x,y)dt,$$
we divide the integral into three parts to prove the theorem.

For $0<\frac{t}{D}\leqslant \frac{1}{e}$,
using Lemma~\ref{heat1}, we get the following estimates:
\begin{equation*}
	\begin{aligned}
	p(t,x,y)&\leqslant \exp\left\{-D\arcsinh \frac{D}{t}+\sqrt{D^2+t^2}-t\right\}\\
	&=\exp\left\{-D\ln\left(\frac{D}{t}+\sqrt{1+\left(\frac{D}{t}\right)^2}\right)+\frac{D^2}{t+\sqrt{t^2+D^2}}\right\}\\
	&\leqslant \exp\left\{-D\ln \frac{2D}{t}+D\right\}=e^D\left(\frac{t}{2D}\right)^D.
	\end{aligned}
\end{equation*}
Then we have
\begin{equation*}
	\begin{aligned}
		&\int^{\frac{D}{e}}_0 t^{-1-\frac{\alpha}{2}}p(t,x,y)dt\leqslant \left(\frac{e}{2D}\right)^D\int^{\frac{D}{e}}_0t^{D-1-\frac{\alpha}{2}}dt\\
		=& \left(\frac{e}{2D}\right)^D\cdot \frac{1}{D-\frac{\alpha}{2}}\cdot \left(\frac{D}{e}\right)^{D-\frac{\alpha}{2}}
		\leqslant e^{\frac{\alpha}{2}}\left(\frac{1}{2}\right)^D\frac{1}{D-\frac{\alpha}{2}}\leqslant CD^{-d-\alpha}.
	\end{aligned}
\end{equation*}

For $\frac{1}{e}\leqslant \frac{t}{D}\leqslant 2$, we define a function
$$h(a)=\frac{\ln\left(a+\sqrt{1+a^2}\right)}{a}-\frac{1}{1+\sqrt{1+a^2}}$$
for $a>0$, and it is not difficult to see that $\zeta_1(t,D)=\frac{D^2}{t}h\left(\frac{D}{t}\right)$. Let $b=a+\sqrt{1+a^2}>1$, and $\frac{1}{b}=\sqrt{a^2+1}-a$. We have
\begin{equation*}
	\begin{aligned}
		ah(a)=\ln b-\frac{b-1}{b+1}>0.
	\end{aligned}
\end{equation*}
This implies $h>0$, and 
$$c_5=\inf_{[\frac{1}{2},e]}h(a)>0.$$
Thus we derive that $\zeta_1(t,D)\geqslant c_5\frac{D^2}{t}$, and
\begin{equation*}
	\begin{aligned}
		&\int^{2D}_{\frac{D}{e}} t^{-1-\frac{\alpha}{2}}p(t,x,y)dt
		\leqslant \int^{2D}_{\frac{D}{e}}e^{-c_5\frac{D^2}{t}}t^{-1-\frac{\alpha}{2}}dt\\
		\leqslant& \left(2-\frac{1}{e}\right)De^{-\frac{c_5D}{2}}\left(\frac{D}{e}\right)^{-1-\frac{\alpha}{2}}\leqslant CD^{-d-\alpha}.
	\end{aligned}
\end{equation*}

For $\frac{t}{D}\geqslant 2$, by Lemma~\ref{heat2} we know the Gaussian heat kernel property holds. Since on $\mathbb{Z}^d$, the volume of a ball $B(x,r)$ has the order $r^d$, we derive that
\begin{equation*}
	\begin{aligned}
		&\int_{2D}^{\infty} t^{-1-\frac{\alpha}{2}}p(t,x,y)dt
		\leqslant c_6\int_{2D}^{\infty}e^{-c_4\frac{D^2}{t}}t^{-1-\frac{\alpha}{2}-\frac{d}{2}}dt\\
		\leqslant& c_6 D^{-d-\alpha}\int_0^{\frac{D}{2}}e^{-c_4z}z^{\frac{\alpha}{2}+\frac{d}{2}-1}dz\leqslant CD^{-d-\alpha},
	\end{aligned}
\end{equation*}
where $c_6$ is a constant depending on $c_3$ and the dimension $d$. Similarly, we also have that 
$$\int_{2D}^{\infty} t^{-1-\frac{\alpha}{2}}p(t,x,y)dt\geqslant C'D^{-d-\alpha}.$$

Combining the above calculations, it is not difficult to deduce that the theorem holds.
\end{proof}

\begin{corollary}
The fractional Laplacian on $\mathbb{Z}^d$ is well-defined if $u$ satisfies
$$u\in H_\alpha(\mathbb{Z}^d):=\{u\in C(\mathbb{Z}^d):\ \forall x\in\mathbb{Z}^d,\ \sum_{y\neq x}\frac{|u(x)-u(y)|}{d(x,y)^{d+\alpha}}<+\infty\}.$$
\end{corollary}

\begin{proof}
For convenience, we write $D=d(x,y)$. From $u\in H_\alpha$, we can assume the condition 
$$|u(x)-u(y)|\leqslant C_3D^{d+\alpha}.$$

Let
$$F_k(t,x)=\sum_{D\leqslant k}p(t,x,y)|u(x)-u(y)|t^{-1-\frac{\alpha}{2}}.$$
 We know $0\leqslant F_k\leqslant F_{k+1}$ and 
$$F(t,x)=\sum_{\substack{y\in \mathbb{Z}^d\\ y\neq x}}p(t,x,y)|u(x)-u(y)|t^{-1-\frac{\alpha}{2}}=\lim_{k\rightarrow\infty}F_k(t,x).$$
For any fixed $t>0$, by the proof of Theorem~\ref{fra} we note that
\begin{equation*}
	\begin{aligned}
	F(t,x)&=\sum_{D\leqslant et}p(t,x,y)|u(x)-u(y)|t^{-1-\frac{\alpha}{2}}+\sum_{D> et}p(t,x,y)|u(x)-u(y)|t^{-1-\frac{\alpha}{2}}\\
	&\leqslant C(t,u)+C_3\sum_{D> et}e^D\left(\frac{t}{2D}\right)^DD^{d+\alpha}t^{-1-\frac{\alpha}{2}}\\
	&\leqslant C(t,u)+C_3 \sum_{D> et}\left(\frac{1}{2}\right)^DD^{d+\alpha}t^{-1-\frac{\alpha}{2}}<+\infty.
	\end{aligned}
\end{equation*}
Thus by the monotone convergence theorem, we have
\begin{equation*}
	\begin{aligned}
	&\int_{0}^\infty F(t,x)dt=\lim_{k\rightarrow+\infty}\int_{0}^\infty F_k(t,x)dt\\
	=&\lim_{k\rightarrow+\infty}\sum_{D\leqslant k}Q_\alpha(x,y)|u(x)-u(y)|\\
	\leqslant&C_2\sum_{y\neq x}\frac{|u(x)-u(y)|}{d(x,y)^{d+\alpha}}<+\infty,
	\end{aligned}
\end{equation*}
which implies that $F\in L^1_t$. By Lebesgue's dominated convergence theorem, we prove that the fractional Laplacian is well-defined.
\end{proof}
\subsection{The Dirichlet eigenvalue problem and some properties}\label{2} \

\

Let $G_1\subsetneq G$ be a finite subgraph. By Definition~\ref{def:b}, we know that $(-\Delta)^{\frac{\alpha}{2}}$ is well-defined on $G$. Suppose $u\in C(G_1)$, and we define the Dirichlet eigenvalue problem on $G_1$ as follows:
\begin{equation}
\begin{aligned}
L^\alpha_{G_1} u(x) = (-\Delta)^{\frac{\alpha}{2}} u^{*}(x)|_{G_1}= \lambda u(x).
\end{aligned}
\end{equation}
Here $L^\alpha_{G_1}$ denotes the fractional Laplacian with Dirichlet boundary condition on $G_1$, and $u^{*}(x)\in C(G)$ obtained by extending functions to be 0 on $G_1^c$.

$L^\alpha_{G_1}$ becomes a finite dimensional operator, and thus has $|G_1|$ eigenvalues. Note that for any $ u\in C(G_1)$, we have $u^*\in l^\infty(G)\subset H_\alpha(G)$. Furthermore, the quadratic form $\langle u,L^\alpha_{G_1} u\rangle _{G_1} = \langle u^*,(-\Delta)^{\frac{\alpha}{2}} u^{*}(x)\rangle _{G}$, with the natural Hermitian inner product is well-defined. We give some properties as follows.

\begin{prop}\label{pro:c}
$\forall u,v\in C(G_1)$, $0<\alpha< 2$, we have that
\begin{enumerate}[(a)]
\item $\langle L^\alpha_{G_1} u,v\rangle _{G_1} = \langle u,L^\alpha_{G_1} v\rangle _{G_1}=\frac{1}{2} \sum_{\substack{x,y\in G\\ x\neq y}} Q_\alpha(x,y)(u^*(x)-u^*(y))\overline{(v^*(x)-v^*(y))}=\frac{1}{2} \sum_{\substack{x,y\in G\\ x\neq y}} Q_\alpha(x,y)(u(x)-u(y))\overline{(v(x)-v(y))}+\sum_{\substack{x\in G_1\\ y\in G_1^c}} Q_\alpha(x,y)u(x)\overline{v(x)}$.
\item The eigenvalues are positive real numbers, labeled as
$$0<\lambda_1(G_1)< \lambda_2(G_1)\leqslant\ldots\leqslant\lambda_{|G_1|}(G_1).$$
\item $\phi_1 > 0 $, where $\{\phi_i\}_{1\leqslant i\leqslant |\Omega|}$ denote the corresponding real eigenfunctions.
\item For any $f\in C(G_1)$, the Poisson's equation $L^\alpha_{G_1} u(x)= f$ has a unique solution in $C(G_1)$.
\end{enumerate} 
\end{prop}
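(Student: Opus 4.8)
The plan is to establish (a) by a direct computation, to deduce (b) and (d) from the positive-definiteness of the resulting quadratic form, and to obtain (c) together with the strict inequality $\lambda_1<\lambda_2$ via a Perron--Frobenius argument. The one structural input that makes everything clean is the \emph{global} positivity $Q_\alpha(x,y)>0$ for all $x\neq y$ (Proposition~\ref{pro:a}(b)) together with the symmetry $Q_\alpha(x,y)=Q_\alpha(y,x)$ (Proposition~\ref{pro:a}(a)).

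For (a) I would start from Definition~\ref{def:b}, which gives $L^\alpha_{G_1}u(x)=\sum_{y\neq x}Q_\alpha(x,y)(u^*(x)-u^*(y))$ for $x\in G_1$. Writing $\langle L^\alpha_{G_1}u,v\rangle_{G_1}=\sum_{x\in G_1}L^\alpha_{G_1}u(x)\overline{v(x)}$ and using $v^*=0$ on $G_1^c$ to extend the outer summation from $G_1$ to all of $G$, I obtain a double sum over pairs $x\neq y$ in $G$. Interchanging $x$ and $y$ and averaging the two expressions, using $Q_\alpha(x,y)=Q_\alpha(y,x)$, produces the factor $\tfrac12$ and the symmetric integrand $(u^*(x)-u^*(y))\overline{(v^*(x)-v^*(y))}$; this form is manifestly conjugate-symmetric, so $\langle L^\alpha_{G_1}u,v\rangle_{G_1}=\langle u,L^\alpha_{G_1}v\rangle_{G_1}$. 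For the bulk/boundary decomposition I would split the double sum according to whether the vertices lie in $G_1$ or $G_1^c$: pairs inside $G_1$ give the first sum; pairs with exactly one vertex in $G_1^c$ give, after using $u^*=v^*=0$ there and combining the two orderings via symmetry of $Q_\alpha$, the term $\sum_{x\in G_1,\,y\in G_1^c}Q_\alpha(x,y)u(x)\overline{v(x)}$; and pairs entirely in $G_1^c$ vanish.

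From (a), the quadratic form is $\langle u,L^\alpha_{G_1}u\rangle_{G_1}=\tfrac12\sum_{x\neq y\in G_1}Q_\alpha(x,y)|u(x)-u(y)|^2+\sum_{x\in G_1,\,y\in G_1^c}Q_\alpha(x,y)|u(x)|^2$, which is real and nonnegative since $Q_\alpha>0$. The crucial point for (b) is strict positivity: because $G_1\subsetneq G$ the set $G_1^c$ is nonempty, and since $Q_\alpha(x,y)>0$ for \emph{every} pair $x\neq y$, any $u\not\equiv 0$ on $G_1$ has some $x$ with $u(x)\neq 0$, forcing the boundary sum to be strictly positive. Hence $L^\alpha_{G_1}$ is a Hermitian, positive-definite operator on the finite-dimensional space $C(G_1)$, so by the spectral theorem its eigenvalues are real and positive. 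Part (d) is then immediate: positive-definiteness excludes $0$ from the spectrum, so $L^\alpha_{G_1}$ is invertible and $L^\alpha_{G_1}u=f$ has the unique solution $u=(L^\alpha_{G_1})^{-1}f$.

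For (c), and for the remaining strict inequality $\lambda_1<\lambda_2$ in the ordering of (b), I would invoke Perron--Frobenius. Writing the operator on $C(G_1)$ as $L^\alpha_{G_1}=D-M$, where $D$ is diagonal with entries $d(x)=\sum_{y\neq x}Q_\alpha(x,y)>0$ and $M(x,y)=Q_\alpha(x,y)$ for $x\neq y$ in $G_1$ with $M(x,x)=0$, I choose $c>\max_{x\in G_1}d(x)$ so that $cI-L^\alpha_{G_1}=M+(cI-D)$ has all entries strictly positive. Perron--Frobenius then yields a simple largest eigenvalue of $cI-L^\alpha_{G_1}$ with a strictly positive eigenvector; this eigenvector is $\phi_1$, and the corresponding eigenvalue of $L^\alpha_{G_1}$ is the simple smallest one $\lambda_1$, giving both $\phi_1>0$ and $\lambda_1<\lambda_2$. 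The main point to watch is the bookkeeping in (a)---in particular the factor $\tfrac12$ and the recombination of the two orderings in the boundary term---while the genuine conceptual content is that the global positivity $Q_\alpha(x,y)>0$ for all $x\neq y$ makes both the strict positive-definiteness and the full positivity of the Perron--Frobenius matrix automatic, in contrast with the local Laplacian, where one must instead track which vertices of $G_1$ actually meet the boundary.
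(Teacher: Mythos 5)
Your proposal is correct. Parts (a), the positive-definiteness in (b), and (d) follow the paper's own argument essentially verbatim: symmetrize the double sum using $Q_\alpha(x,y)=Q_\alpha(y,x)$, split into the interior pairs and the mixed pairs (the pairs entirely in $G_1^c$ vanish), and read off self-adjointness, strict positivity of the form (via the nonempty boundary sum and $Q_\alpha>0$), and invertibility. Where you genuinely diverge is in (c) and in the simplicity of $\lambda_1$: the paper proves $\phi_1\geqslant 0$ by the variational comparison $\langle|\phi_1|,L^\alpha_{G_1}|\phi_1|\rangle<\langle\phi_1,L^\alpha_{G_1}\phi_1\rangle$ for a sign-changing $\phi_1$, then rules out a zero set $M$ by an explicit $\epsilon$-perturbation $u=\phi_1+\epsilon 1_M$ with a carefully chosen $\epsilon<\frac{2aa_2}{a_1+a_2}$, and finally deduces $\lambda_1<\lambda_2$ from (c) by forming a sign-changing combination of two putative ground states. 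You instead shift by $cI$ with $c>\max_x\sum_{y\neq x}Q_\alpha(x,y)$ so that $cI-L^\alpha_{G_1}$ is entrywise strictly positive and invoke Perron--Frobenius, getting $\phi_1>0$ and the simplicity of $\lambda_1$ in one stroke. Both routes hinge on the same structural fact you correctly isolate, namely $Q_\alpha(x,y)>0$ for \emph{all} $x\neq y$ (which follows from the strict positivity of the heat kernel in Proposition~\ref{pro:a}, not from that proposition directly as stated). Your route is shorter and avoids the delicate $\epsilon$-bookkeeping; the paper's is self-contained and does not appeal to matrix theory. The only point worth making explicit in your version is why $c-\lambda_1$ is the Perron root of $cI-L^\alpha_{G_1}$: since that matrix is symmetric and entrywise positive, Perron's theorem forbids $-\rho$ from being an eigenvalue, so the spectral radius must be attained at the top eigenvalue $c-\lambda_1$ rather than at $|c-\lambda_{|G_1|}|$; with that one sentence added, your argument is complete.
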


\begin{proof}

\

$(a)$: From (3), Proposition~\ref{pro:a} and Definition~\ref{def:b}, we have $Q_\alpha(x,y) = Q_\alpha(y,x)$, and
\begin{equation*}
\begin{aligned}
&\langle L^\alpha_{G_1} u,v\rangle _{G_1} = \langle (-\Delta)^{\frac{\alpha}{2}} u^{*}(x),v^*(x)\rangle _{G}\\
= &\sum_{\substack{x,y\in G\\ x\neq y}} Q_\alpha(x,y)(u^*(x)-u^*(y))\overline{v^*(x)}\\
= &\frac{1}{2} \sum_{\substack{x,y\in G\\ x\neq y}} Q_\alpha(x,y)(u^*(x)-u^*(y))\overline{(v^*(x)-v^*(y))}\\
= &\frac{1}{2} \sum_{\substack{x,y\in G\\ x\neq y}} Q_\alpha(x,y)(u(x)-u(y))\overline{(v(x)-v(y))}+\sum_{\substack{x\in G_1\\ y\in G_1^c}} Q_\alpha(x,y)u(x)\overline{v(x)}\\
= &\langle u,L^\alpha_{G_1} v\rangle _{G_1}.
\end{aligned}
\end{equation*}

$(b)$: The property (a) implies that the operator $L^\alpha_{G_1}$ is self-adjoint and positive definite, which derives that the eigenvalues are positive and real numbers. To prove $\lambda_1$ is single, suppose that there exist real eigenfunctions $\phi_1$ and $\varphi_1$ corresponding to $\lambda_1$, which satisfy $\phi_1 \neq c\varphi_1$. Select a constant $k$ to make $\phi_1 - k\varphi_1$ a sign changing eigenfunction, which contradicts the property (c).

$(c)$: Suppose $\langle \phi_1, \phi_1\rangle _{G_1} = 1$. If $\phi_1$ changes sign, we choose $\phi_1(x_1)<0$ and $\phi_1(x_2)>0$. Define $\widetilde{\phi_1} = |\phi_1|$, and there holds
\begin{equation*}
\begin{aligned}
&\lambda_1 = \langle \phi_1,L^\alpha_{G_1} \phi_1\rangle _{G_1} \\
=& \frac{1}{2} \sum_{\substack{x,y\in G_1\\ x\neq y}} Q_\alpha(x,y)(\phi_1(x)-\phi_1(y))^2+\sum_{\substack{x\in G_1\\ y\in G_1^c}} Q_\alpha(x,y)\phi_1^2(x)\\
>& \frac{1}{2} \sum_{\substack{x,y\in G_1\\ x\neq y}} Q_\alpha(x,y)(\widetilde{\phi_1}(x)-\widetilde{\phi_1}(y))^2+\sum_{\substack{x\in G_1\\ y\in G_1^c}} Q_\alpha(x,y)\widetilde{\phi_1}^2(x)\\
=& \langle \widetilde{\phi_1},L^\alpha_{G_1} \widetilde{\phi_1}\rangle _{G_1} ,
\end{aligned}
\end{equation*}
causing a contradiction.

The above argument shows that $\phi_1\geqslant 0$. Assume $G_1\supset M = \{x\in G_1:\phi_1(x)=0\}\neq\emptyset$. Let
$$a = \min_{x\in G_1}\{\phi_1(x)\neq0\}>0,\
a_1 = \sum_{\substack{x\in M\\ y\in G_1^c}}Q_\alpha(x,y)>0,\
a_2 = \sum_{\substack{x\in M\\ y\in G_1\backslash M}}Q_\alpha(x,y)>0,$$
and choose $0<\epsilon< \frac{2aa_2}{a_1+a_2}<2a$. Then we consider a new function
\begin{equation*}
u(x) =\left\{
\begin{aligned}
& \phi_1(x)\quad ,x\notin M,\\
& \epsilon \quad \quad\quad ,x\in M.\\
\end{aligned}
\right.
\end{equation*}
It is clear that $\langle u, u\rangle _{G_1} = 1+|M|\epsilon^2$, and $\frac{2a-\epsilon}{\epsilon}>\frac{a_1}{a_2}$. By calculation, we have
\begin{equation*}
\begin{aligned}
&\langle u,L^\alpha_{G_1} u\rangle _{G_1} = \frac{1}{2} \sum_{\substack{x,y\in G_1\\ x\neq y}} Q_\alpha(x,y)(u(x)-u(y))^2+\sum_{\substack{x\in G_1\\ y\in G_1^c}} Q_\alpha(x,y)u^2(x)\\
=& \langle \phi_1,L^\alpha_{G_1} \phi_1\rangle_{G_1}+\sum_{\substack{x\in M\\ y\in G_1^c}} Q_\alpha(x,y)\epsilon^2+\sum_{\substack{x\in M\\ y\in G_1\setminus M}} Q_\alpha(x,y)[(\epsilon-u(y))^2-(0-u(y))^2]\\
=& \lambda_1 + a_1\epsilon^2-\sum_{\substack{x\in M\\ y\in G_1\setminus M}} Q_\alpha(x,y)\epsilon(2u(y)-\epsilon)\\
\leqslant& \lambda_1 + a_1\epsilon^2-a_2\epsilon(2a-\epsilon)
= \lambda_1 + (a_1-\frac{2a-\epsilon}{\epsilon}a_2)\epsilon^2 <\lambda_1,
\end{aligned}
\end{equation*}
which is a contradiction.

$(d)$: The unique solution is given by $(L^\alpha_{G_1})^{-1}f$.
\end{proof}

\subsection{The Fourier transform on $\mathbb{Z}^d$}\label{3} \

\

Let $\mathbb{Z}^d$ be the infinite d-dimensional integer lattice, and we choose $S = \{e_1,\ldots,e_d,-e_1,\ldots,-e_d\}$ as the generating set. Here $e_i\in \mathbb{Z}^d$ is the vector whose i-th component is $1$ and the rest are $0$. Thus we can consider $\mathbb{Z}^d$ as the Cayley graph generated by $S$. From Definition~\ref{def:b} and (3), we are clear about the definitions of the fractional Laplacian and the Dirichlet eigenvalue problem on $\mathbb{Z}^d$. In this subsection, we introduce the Fourier transform on $\mathbb{Z}^d$, and derive some critical equalities and inequalities on a finite subgraph $\Omega$ of $\mathbb{Z}^d$.

For $z\in [-\pi,\pi]^d$, let us define $h_z(x):\mathbb{Z}^d\rightarrow \mathbb{C}$ by $ h_z(x)= e^{i\langle x,z\rangle}$, where $\langle x,z\rangle = \sum_{i=1}^d x_iz_i$. Thus, the Fourier transform on $\mathbb{Z}^d$ is defined by
\begin{equation*}
\begin{aligned}
\mathscr{F}:\quad \mathbb{C}^{\mathbb{Z}^d}&\rightarrow \ \mathbb{C}^{[-\pi,\pi]^d}\\
u\ \  &\mapsto \ \ \widehat{u},
\end{aligned}
\end{equation*}
where $u\in l^1(\mathbb{Z}^d)$ and
$$\widehat{u}(z) = \sum_{x\in\mathbb{Z}^d} e^{-i\langle x,z\rangle}u(x)=\langle u,h_z\rangle_{\mathbb{Z}^d},$$
$$u(x) = (2\pi)^{d}\int_{[-\pi,\pi]^d}e^{i\langle x,\xi\rangle}\widehat{u}(\xi)d\xi.$$
Let $\Omega$ be a finite subgraph of $\mathbb{Z}^d$. By (3), if $u\in C(\Omega)$, it is easy to see $u^*\in l^1(\mathbb{Z}^d)$ and $\widehat{L^\alpha_{\Omega} u}$ is well-defined.

To prove Theorem~\ref{thm:main1}, we introduce the function $\Phi (z) = \sum_{i=1}^d(2-2\cos z_i)$ for $z\in[-\pi,\pi]^d$, and prove some relevant lemmas as follows.
\begin{lemma}\label{lm1}
For any $u\in l^1(\mathbb{Z}^d)$, $0<\alpha< 2$, $\xi\in [-\pi,\pi]^d$, we have
$$\widehat{(-\Delta)^{\frac{\alpha}{2}} u}(\xi) = (\Phi (\xi))^\frac{\alpha}{2}\widehat{u}(\xi).$$
\end{lemma}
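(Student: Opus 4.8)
The plan is to compute the Fourier transform of $(-\Delta)^{\frac{\alpha}{2}}u$ directly from Bochner's definition, reducing everything to the known action of the heat semigroup on the Fourier side. First I would establish the base case: the function $h_z(x)=e^{i\langle x,z\rangle}$ is a (generalized) eigenfunction of the graph Laplacian $\Delta$ on $\mathbb{Z}^d$ with eigenvalue $-\Phi(z)$. Indeed, applying the definition $\Delta h_z(x)=\sum_{j=1}^d\big(h_z(x+e_j)+h_z(x-e_j)-2h_z(x)\big)$ and factoring out $e^{i\langle x,z\rangle}$ yields $\sum_{j=1}^d(e^{iz_j}+e^{-iz_j}-2)=\sum_{j=1}^d(2\cos z_j-2)=-\Phi(z)$. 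Consequently, since the heat semigroup is the exponential of $t\Delta$, one expects $e^{t\Delta}h_z=e^{-t\Phi(z)}h_z$, which on the Fourier transform side translates to $\widehat{e^{t\Delta}u}(\xi)=e^{-t\Phi(\xi)}\widehat{u}(\xi)$.

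The core computation is then to insert this into Bochner's formula. Writing
\begin{equation*}
\widehat{(-\Delta)^{\frac{\alpha}{2}}u}(\xi)=\frac{1}{|\Gamma(-\frac{\alpha}{2})|}\int_0^\infty\big(\widehat{u}(\xi)-\widehat{e^{t\Delta}u}(\xi)\big)t^{-1-\frac{\alpha}{2}}\,dt
=\frac{\widehat{u}(\xi)}{|\Gamma(-\frac{\alpha}{2})|}\int_0^\infty\big(1-e^{-t\Phi(\xi)}\big)t^{-1-\frac{\alpha}{2}}\,dt,
\end{equation*}
the whole statement reduces to the scalar identity $\frac{1}{|\Gamma(-\frac{\alpha}{2})|}\int_0^\infty(1-e^{-\lambda t})t^{-1-\frac{\alpha}{2}}\,dt=\lambda^{\frac{\alpha}{2}}$ for $\lambda\geqslant 0$, applied with $\lambda=\Phi(\xi)$. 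This is a standard Gamma-function evaluation: substituting $s=\lambda t$ gives $\lambda^{\frac{\alpha}{2}}\int_0^\infty(1-e^{-s})s^{-1-\frac{\alpha}{2}}\,ds$, and integration by parts converts the remaining integral into $\frac{2}{\alpha}\int_0^\infty e^{-s}s^{-\frac{\alpha}{2}}\,ds=\frac{2}{\alpha}\Gamma(1-\frac{\alpha}{2})=\Gamma(-\frac{\alpha}{2})\cdot(-1)$, whose absolute value cancels the prefactor. (The sign bookkeeping is exactly why the definition carries $|\Gamma(-\frac{\alpha}{2})|$ rather than $\Gamma(-\frac{\alpha}{2})$, since $\Gamma(-\frac{\alpha}{2})<0$ for $0<\alpha<2$.)

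The main obstacle I anticipate is justifying the interchange of the Fourier transform (an infinite sum over $x\in\mathbb{Z}^d$) with the time integral $\int_0^\infty\cdots\,dt$, and likewise justifying $\widehat{e^{t\Delta}u}(\xi)=e^{-t\Phi(\xi)}\widehat{u}(\xi)$ rigorously rather than merely formally. For the latter, rather than invoking the eigenfunction heuristic (which is delicate since $h_z\notin l^1$), I would verify it via the heat kernel: using $e^{t\Delta}u(x)=\sum_y p(t,x,y)u(y)$ together with translation invariance of $p$ on $\mathbb{Z}^d$, the Fourier transform of the convolution factorizes as $\widehat{p(t,\cdot)}\cdot\widehat{u}$ (up to normalization), and $\widehat{p(t,\cdot)}(\xi)=e^{-t\Phi(\xi)}$ follows by solving the heat equation $\partial_t\widehat{p}=-\Phi(\xi)\widehat{p}$ with initial data $\widehat{p(0,\cdot)}$, using Proposition~\ref{pro:a}(d),(e). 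For the Fubini interchange, I would use that $u\in l^1(\mathbb{Z}^d)$ makes $\widehat{u}$ bounded and the integrand absolutely integrable: near $t=0$ the factor $1-e^{-t\Phi(\xi)}=O(t)$ controls the $t^{-1-\frac{\alpha}{2}}$ singularity (since $\alpha<2$), and near $t=\infty$ the $t^{-1-\frac{\alpha}{2}}$ decay suffices, exactly the estimates already used in the proof following Definition~\ref{def:b}.
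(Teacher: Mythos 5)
Your proposal is correct and follows essentially the same route as the paper: apply the Fourier transform to Bochner's formula, identify $\widehat{e^{t\Delta}u}(\xi)=e^{-t\Phi(\xi)}\widehat{u}(\xi)$ via the transformed heat equation, and evaluate the resulting scalar integral by the standard Gamma-function computation (integration by parts for $\Gamma(1-\frac{\alpha}{2})$). The extra care you take with the Fubini interchange and the sign of $\Gamma(-\frac{\alpha}{2})$ is a welcome refinement of details the paper leaves implicit, but the argument is the same.
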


\begin{proof}
By Definition~\ref{def:b}, we apply the Fourier transform operator to both sides of equation (2) and get
$$\widehat{(-\Delta)^{\frac{\alpha}{2}} u}(\xi) = \frac{1}{|\Gamma(-\frac{\alpha}{2})|}\int^{\infty}_0 (\widehat{u}(\xi)-\widehat{e^{t\Delta}u}(\xi))t^{-1-\frac{\alpha}{2}}dt.$$
For the heat equation (1), we also apply the the Fourier transform operator to both sides, and the equations are transformed into
\begin{equation*}
\left\{
\begin{aligned}
& \frac{d}{dt}\widehat{v}(t,\xi) = -\sum_{j=1}^d(2-e^{i\langle e_j,\xi\rangle}- e^{-i\langle e_j,\xi\rangle})\widehat{v}(t,\xi)\\
&\quad\quad\quad\ \ =-\Phi (\xi)\widehat{v}(t,\xi) \quad\quad in \ (0,+\infty) \times [-\pi,\pi]^d,\\
& \widehat{v}(0,\xi) = \widehat{u}(\xi)  \quad\quad\quad\quad\quad\quad\  in \ [-\pi,\pi]^d. \\
\end{aligned}
\right.
\end{equation*}
The solution is given as $\widehat{e^{t\Delta}u}(\xi)=\widehat{v}(t,\xi)=e^{-\Phi(\xi)t}\widehat{u}(\xi)$. Hence, by the identity
$$\lambda^{\frac{\alpha}{2}}=\frac{1}{|\Gamma(-\frac{\alpha}{2})|}\int^{\infty}_0 (1-e^{-\lambda t})t^{-1-\frac{\alpha}{2}}dt, $$
we derive that
$$\widehat{(-\Delta)^{\frac{\alpha}{2}} u}(\xi) =  (\Phi (\xi))^\frac{\alpha}{2}\widehat{u}(\xi).$$

\end{proof}

With Lemma~\ref{lm1} and the Plancherel formula, we observe that for any $u,v\in C(\Omega)$,
\begin{equation}
\begin{aligned}
&\langle v,L^\alpha_{\Omega} u\rangle _{\Omega}=\langle v^*,(-\Delta)^{\frac{\alpha}{2}} u^*\rangle _{\mathbb{Z}^d}\\
=&\frac{1}{(2\pi)^d}\langle \widehat{v^*},\widehat{(-\Delta)^{\frac{\alpha}{2}} u^*}\rangle _{[-\pi,\pi]^d}\\
=&\frac{1}{(2\pi)^d}\int_{[-\pi,\pi]^d}(\Phi (\xi))^\frac{\alpha}{2}\widehat{v^*}(\xi)\widehat{\overline{u^*}}(\xi)d\xi,
\end{aligned}
\end{equation}
and this integral is bounded.

\begin{lemma}\label{lm2}
Let $\Omega$ be a finite subgraph of $\mathbb{Z}^d$. For any $u\in C(\Omega)$, $0<\alpha< 2$, the following holds:
\begin{enumerate}[(a)]
\item $$\langle u, u\rangle _{\Omega}= \frac{1}{(2\pi)^d}\int_{[-\pi,\pi]^d}|\langle u, h_z\rangle _{\Omega}|^2dz.$$
\item $$\langle u,L^\alpha_{\Omega} u\rangle _{\Omega} = \frac{1}{(2\pi)^d}\int_{[-\pi,\pi]^d}(\Phi (z))^\frac{\alpha}{2}|\langle u, h_z\rangle _{\Omega}|^2dz.$$
\end{enumerate}
\end{lemma}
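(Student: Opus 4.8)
The plan is to deduce both identities from the Plancherel formula on $\mathbb{Z}^d$ and Lemma~\ref{lm1}, after one elementary observation. Since $u\in C(\Omega)$ has zero extension $u^*$ supported on the finite set $\Omega$, its Fourier transform is a finite sum and
\[
\widehat{u^*}(z)=(2\pi)^{-\frac{d}{2}}\sum_{x\in\Omega}e^{-i\langle x,z\rangle}u(x)=(2\pi)^{-\frac{d}{2}}\langle u,h_z\rangle_\Omega ,
\]
so that $|\widehat{u^*}(z)|^2=(2\pi)^{-d}|\langle u,h_z\rangle_\Omega|^2$. In particular $u^*\in l^1(\mathbb{Z}^d)\cap l^2(\mathbb{Z}^d)$, so under the chosen normalization the Plancherel identity $\langle f,g\rangle_{l^2(\mathbb{Z}^d)}=\langle\widehat f,\widehat g\rangle_{L^2([-\pi,\pi]^d)}$ is available for $f,g\in\{u^*,(-\Delta)^{\frac{\alpha}{2}}u^*\}$; note that this normalization makes $\mathscr{F}$ unitary, so no extra constant appears from Plancherel itself.

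For part (a), I would use $\langle u,u\rangle_\Omega=\langle u^*,u^*\rangle_{\mathbb{Z}^d}$ (immediate from the zero extension) and apply Plancherel to obtain $\langle u,u\rangle_\Omega=\int_{[-\pi,\pi]^d}|\widehat{u^*}(z)|^2\,dz$. Substituting the displayed support identity gives exactly $\frac{1}{(2\pi)^d}\int_{[-\pi,\pi]^d}|\langle u,h_z\rangle_\Omega|^2\,dz$.

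For part (b), I would start from the quadratic-form identity $\langle u,L^\alpha_\Omega u\rangle_\Omega=\langle u^*,(-\Delta)^{\frac{\alpha}{2}}u^*\rangle_{\mathbb{Z}^d}$ (the $v=u$ case of the display preceding the lemma) and apply Plancherel to the pair $u^*,(-\Delta)^{\frac{\alpha}{2}}u^*$, giving $\int_{[-\pi,\pi]^d}\widehat{u^*}(z)\,\overline{\widehat{(-\Delta)^{\frac{\alpha}{2}}u^*}(z)}\,dz$. By Lemma~\ref{lm1} the second factor equals $(\Phi(z))^{\frac{\alpha}{2}}\overline{\widehat{u^*}(z)}$; here I use that $\Phi(z)=\sum_{i=1}^d(2-2\cos z_i)\geqslant0$ is real, so $(\Phi(z))^{\frac{\alpha}{2}}$ passes through the conjugation unchanged. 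The integrand collapses to $(\Phi(z))^{\frac{\alpha}{2}}|\widehat{u^*}(z)|^2$, and substituting the support identity once more yields $\frac{1}{(2\pi)^d}\int_{[-\pi,\pi]^d}(\Phi(z))^{\frac{\alpha}{2}}|\langle u,h_z\rangle_\Omega|^2\,dz$.

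There is no real obstacle: the lemma is a bookkeeping consequence of Lemma~\ref{lm1} and Plancherel. The only points needing care are keeping the normalization constant consistent so that the final prefactor is $(2\pi)^{-d}$ (arising from squaring $(2\pi)^{-d/2}$ in the support identity, not from Plancherel), checking $u^*\in l^2(\mathbb{Z}^d)$ so Plancherel applies (immediate since $\Omega$ is finite), and using the realness and nonnegativity of $\Phi$ to move $(\Phi)^{\frac{\alpha}{2}}$ past the complex conjugation. Nonnegativity of $\Phi$ also makes the right-hand side of (b) manifestly a nonnegative real number, consistent with $\langle u,L^\alpha_\Omega u\rangle_\Omega\geqslant0$ from Proposition~\ref{pro:c}.
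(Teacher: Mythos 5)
Your proposal is correct and follows essentially the same route as the paper: part (a) is Plancherel, and part (b) is the $v=u$ case of the quadratic-form identity (the paper's display (4)) combined with Lemma~\ref{lm1} and the observation that $\widehat{u^*}(z)$ equals $\langle u,h_z\rangle_\Omega$ up to the normalization constant. Your version is in fact slightly more careful than the paper's about where the factor $(2\pi)^{-d/2}$ lives (the paper drops it when writing $\widehat{u^*}(z)=\langle u,h_z\rangle_\Omega$ in its proof of (b)), but this is a bookkeeping point, not a different argument.
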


\begin{proof}

\

$(a)$: The result comes from the Plancherel formula.

$(b)$: From equation (4), let us replace v with u, then we have
\begin{equation*}
\begin{aligned}
\langle u,L^\alpha_{\Omega} u\rangle _{\Omega}
=\frac{1}{(2\pi)^d}\int_{[-\pi,\pi]^d}(\Phi (z))^\frac{\alpha}{2}|\widehat{u^*}(z)|^2dz.
\end{aligned}
\end{equation*}
Since $\widehat{u^*}(z)=\sum_{x\in \mathbb{Z}^d}e^{-i\langle x,z\rangle}u^*(x)=\sum_{x\in \Omega}u(x)\overline{h_z}(x)=\langle u, h_z\rangle _{\Omega}$, the proof is completed.

\end{proof}

\begin{lemma}\label{lm3}
Let $\Omega$ be a finite subgraph of $\mathbb{Z}^d$. For $0<\alpha< 2$, the following holds:
\begin{enumerate}[(a)]
\item $$\langle h_z, h_z\rangle _{\Omega}= |\Omega|.$$
\item $$|\langle h_z,L^\alpha_{\Omega} h_z\rangle _{\Omega}| \leqslant (\Phi (z))^\frac{\alpha}{2}|\Omega|+|\partial^\alpha \Omega|,$$
where $$|\partial^\alpha \Omega| = \sum_{\substack{x\in \Omega^c\\ y\in\Omega}}Q_\alpha (x,y) < +\infty.$$
\end{enumerate}
\end{lemma}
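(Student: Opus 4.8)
The plan is to prove the two parts of Lemma~\ref{lm3} separately, with part (a) being an immediate computation and part (b) requiring the structural identity from Proposition~\ref{pro:c}(a) together with a careful cancellation argument.

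For part (a), I would simply compute directly from the definition of the inner product on $\Omega$. Since $h_z(x) = e^{i\langle x,z\rangle}$ has modulus one at every vertex, we get
\begin{equation*}
\langle h_z, h_z\rangle_{\Omega} = \sum_{x\in\Omega} h_z(x)\overline{h_z(x)} = \sum_{x\in\Omega} |e^{i\langle x,z\rangle}|^2 = \sum_{x\in\Omega} 1 = |\Omega|.
\end{equation*}
No subtlety arises here.

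For part (b), the natural starting point is the quadratic-form identity from Proposition~\ref{pro:c}(a) applied with $u = v = h_z$ restricted to $\Omega$. This expresses $\langle h_z, L^\alpha_{\Omega} h_z\rangle_{\Omega}$ as a sum of an interior double-sum term over pairs in $\Omega$ and a boundary term $\sum_{x\in\Omega, y\in\Omega^c} Q_\alpha(x,y)|h_z(x)|^2 = \sum_{x\in\Omega, y\in\Omega^c} Q_\alpha(x,y) = |\partial^\alpha\Omega|$. However, I expect the cleaner route is to avoid splitting and instead recognize, via Lemma~\ref{lm2}(b) applied to the single function $h_z$, that $\langle h_z, L^\alpha_{\Omega} h_z\rangle_{\Omega}$ should be compared against the ``ideal'' symbol value $(\Phi(z))^{\alpha/2}|\Omega|$. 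The key identity is that the zero-extension $(h_z)^*$ agrees with the global plane wave $h_z$ on $\Omega$ but is truncated outside, so $(-\Delta)^{\alpha/2}(h_z)^*$ differs from $(\Phi(z))^{\alpha/2} h_z$ precisely because the global eigenrelation $(-\Delta)^{\alpha/2} h_z = (\Phi(z))^{\alpha/2} h_z$ (the spatial analogue of Lemma~\ref{lm1}) is broken by truncation. The plan is therefore to write $\langle h_z, L^\alpha_{\Omega} h_z\rangle_{\Omega} = (\Phi(z))^{\alpha/2}|\Omega| + R$, where the remainder $R$ collects the contributions from vertices outside $\Omega$, and then bound $|R|$ by $|\partial^\alpha\Omega|$.

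Concretely, I would compute $\langle h_z, L^\alpha_{\Omega} h_z\rangle_{\Omega} = \sum_{x\in\Omega} \overline{h_z(x)}\,(-\Delta)^{\alpha/2}(h_z)^*(x)$ and insert the pointwise formula $(-\Delta)^{\alpha/2}(h_z)^*(x) = \sum_{y\neq x} Q_\alpha(x,y)\big((h_z)^*(x) - (h_z)^*(y)\big)$. Splitting the inner sum over $y\in\Omega$ versus $y\in\Omega^c$, the $y\in\Omega$ part reassembles toward the full lattice sum that would give exactly $(\Phi(z))^{\alpha/2}$ times the plane wave by the global eigenrelation, while the discrepancy is exactly the missing terms with $y\in\Omega^c$, for which $(h_z)^*(y)=0$. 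Tracking this, the difference $R$ is a sum of terms each bounded in modulus by $Q_\alpha(x,y)$ (since $|h_z|\le 1$ everywhere), ranging over $x\in\Omega$ and $y\in\Omega^c$, whence $|R| \le \sum_{x\in\Omega, y\in\Omega^c} Q_\alpha(x,y) = |\partial^\alpha\Omega|$. Combining gives $|\langle h_z, L^\alpha_{\Omega} h_z\rangle_{\Omega}| \le (\Phi(z))^{\alpha/2}|\Omega| + |\partial^\alpha\Omega|$. I expect the main obstacle to be the bookkeeping in the cancellation: one must verify that the global eigenrelation $(-\Delta)^{\alpha/2} h_z = (\Phi(z))^{\alpha/2} h_z$ holds in the pointwise (spatial) sense so that the interior terms truly reconstruct the symbol value, and then carefully confirm that every uncancelled term is indexed by a boundary pair $(x,y)$ with $x\in\Omega$, $y\in\Omega^c$, so that the crude bound $|h_z|\le 1$ yields precisely $|\partial^\alpha\Omega|$ with no double-counting and no lost terms.
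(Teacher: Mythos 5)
Your proof is correct, and part (a) is identical to the paper's. For part (b) you take a genuinely parallel but distinct route. The paper completes the \emph{outer} sum: it writes $\langle h_z,L^\alpha_\Omega h_z\rangle_\Omega=\sum_{x\in\Omega}e^{-i\langle x,z\rangle}(-\Delta)^{\frac{\alpha}{2}}h_z^*(x)$, extends the sum to all of $\mathbb{Z}^d$ (where it becomes $(2\pi)^{d/2}\widehat{(-\Delta)^{\alpha/2}h_z^*}(z)$ and is evaluated exactly as $(\Phi(z))^{\alpha/2}|\Omega|$ via Lemma~\ref{lm1} applied to the compactly supported, hence $l^1$, function $h_z^*$), and bounds the discarded terms $x\in\Omega^c$ using $(-\Delta)^{\alpha/2}h_z^*(x)=-\sum_{y\in\Omega}Q_\alpha(x,y)h_z(y)$ there, yielding exactly $|\partial^\alpha\Omega|$ indexed over $x\in\Omega^c$, $y\in\Omega$. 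You instead complete the \emph{inner} sum over $y$, invoking the pointwise eigenrelation $(-\Delta)^{\alpha/2}h_z=(\Phi(z))^{\alpha/2}h_z$ for the global plane wave, and your remainder is indexed over $x\in\Omega$, $y\in\Omega^c$; by the symmetry $Q_\alpha(x,y)=Q_\alpha(y,x)$ this is the same quantity $|\partial^\alpha\Omega|$. The one point you correctly flag as needing care is real: the pointwise eigenrelation is \emph{not} Lemma~\ref{lm1} (which is stated only for $u\in l^1(\mathbb{Z}^d)$, and $h_z\notin l^1$), so you must verify it separately -- e.g.\ from $\Delta h_z=-\Phi(z)h_z$, hence $e^{t\Delta}h_z=e^{-\Phi(z)t}h_z$ by uniqueness of bounded solutions on $\mathbb{Z}^d$, and then Bochner's formula; or directly from translation invariance, $Q_\alpha(x,y)=q(y-x)$, which gives $\sum_{w\neq 0}q(w)(1-e^{i\langle w,z\rangle})=(\Phi(z))^{\alpha/2}$. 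Once that is in place (note $h_z\in H_\alpha(\mathbb{Z}^d)$ since $\sum_{y\neq x}Q_\alpha(x,y)<\infty$, and the interchange of the finite outer sum with the absolutely convergent inner sums is harmless), your bookkeeping closes and the bound follows. What the paper's route buys is that it only ever applies the Fourier identity to an $l^1$ function, so no new verification is needed; what yours buys is that it exhibits the exact identity $\langle h_z,L^\alpha_\Omega h_z\rangle_\Omega=(\Phi(z))^{\alpha/2}|\Omega|+R$ with an explicit boundary remainder $R=\sum_{x\in\Omega,\,y\in\Omega^c}Q_\alpha(x,y)\overline{h_z(x)}h_z(y)$, which is slightly more informative than the one-sided estimate.
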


\begin{proof}

\

$(a)$: $$\langle h_z, h_z\rangle _{\Omega}= \sum_{x\in\Omega}h_z(x)\overline{h_z}(x)=\sum_{x\in\Omega}1=|\Omega|.$$

$(b)$: Since
\begin{equation*}
\begin{aligned}
|\langle h_z,L^\alpha_{\Omega} h_z\rangle _{\Omega}| = |\langle h_z^*,(-\Delta)^{\frac{\alpha}{2}} h_z^*\rangle _{\mathbb{Z}^d}|
= |\sum_{x\in\Omega}e^{-i\langle x,z\rangle}\cdot (-\Delta)^{\frac{\alpha}{2}} h_z^*(x)|
\end{aligned}
\end{equation*}
and
\begin{equation*}
\begin{aligned}
&|\langle h_z,(-\Delta)^{\frac{\alpha}{2}} h_z^*\rangle _{\mathbb{Z}^d}|
= |\sum_{x\in\mathbb{Z}^d}e^{-i\langle x,z\rangle}\cdot (-\Delta)^{\frac{\alpha}{2}} h_z^*(x)|\\
= &|(2\pi)^{\frac{d}{2}}\widehat{(-\Delta)^{\frac{\alpha}{2}} h_z^*}(z)| = |(2\pi)^{\frac{d}{2}}(\Phi (z))^\frac{\alpha}{2}\widehat{h_z^*}(z)|\\
= &|(\Phi (z))^\frac{\alpha}{2}\sum_{x\in\Omega}h_z(x)\cdot e^{-i\langle x,z\rangle}|=(\Phi (z))^\frac{\alpha}{2}|\Omega|,
\end{aligned}
\end{equation*}
we can estimate the left-hand side of this inequality,
\begin{equation*}
\begin{aligned}
|\langle h_z,L^\alpha_{\Omega} h_z\rangle _{\Omega}|
\leqslant &(\Phi (z))^\frac{\alpha}{2}|\Omega|+ \sum_{x\in \Omega^c}|e^{-i\langle x,z\rangle}\cdot (-\Delta)^{\frac{\alpha}{2}} h_z^*(x)|\\
\leqslant &(\Phi (z))^\frac{\alpha}{2}|\Omega|+ \sum_{\substack{x\in \Omega^c\\ y\in\mathbb{Z}^d}}|Q_\alpha (x,y)h_z^*(y)|\\
\leqslant &(\Phi (z))^\frac{\alpha}{2}|\Omega|+|\partial^\alpha \Omega|.
\end{aligned}
\end{equation*}
\end{proof}

\begin{remark}\label{rm0}
As $\alpha\rightarrow0$ on $\mathbb{Z}^d$, it is known that  $(-\Delta)^{\frac{\alpha}{2}}$ tends to the identity operator. Indeed, we have
$$(-\Delta)^{s}u(x) = u(x) +s L_\Delta u(x)+o(s)\quad as\ s\rightarrow 0^+,$$
where $L_\Delta$ is the logarithmic Laplacian. By the Fourier transform we define
$$\langle v,L_{\Delta} u\rangle
=\frac{1}{(2\pi)^d}\int_{[-\pi,\pi]^d}\ln\Phi(\xi)\widehat{v}(\xi)\widehat{\overline{u}}(\xi)d\xi,$$
which is well-defined for $u,v\in l^1(\mathbb{Z}^d)$. There have been a lot of researches about the logarithmic Laplacian in the continuous case, and the above properties of the fractional Laplacian may provide insights into the logarithmic Laplacian in the discrete case..
\end{remark}
\section{Proof of Theorem~\ref{thm:main1}}\label{sec:pro}

Let $\Omega$ be a finite subgraph of $\mathbb{Z}^d$. The eigenvalues of the operator $L^\alpha_{\Omega}$ are denoted by
$$0< \lambda_1<\ldots\leqslant\lambda_{|\Omega|},$$
and $\{\phi_j\}_{1\leqslant j\leqslant |\Omega|}$ are the corresponding standardized eigenfunctions. In this section, we follow the methods in \cite{1983On,BL21,1994Estimates} to derive the upper and lower bounds of the Dirichlet eigenvalues of the fractional Laplacian, and prove Theorem~\ref{thm:main1}.

\subsection{Upper bound}

\

\

We begin with a general lemma (proved in \cite{BL21}) on eigenvalues.
\begin{lemma}[Lemma~{3.1} in \cite{BL21}]\label{lm4}
Let L be a self-adjoint, positive semidefinite operator on a finite dimensional, Hermitian, complex vector space W with Hermitian inner product $\langle, \rangle$. Let $0\leqslant \gamma_1\leqslant\ldots\leqslant\gamma_s$ denote its eigenvalues, and let us choose an orthonormal basis of eigenfunctions $f_i: i=1,\ldots,s$, where $f_i$ corresponds to $\gamma_i$. Then for any $1\leqslant k\leqslant s$ and any vector $g\in W$ one has
\begin{equation}
\begin{aligned}
\gamma_{k+1}\langle g,g \rangle \leqslant \langle g,Lg \rangle + \sum_{j=1}^k(\gamma_{k+1}-\gamma_j)|\langle g,f_j \rangle|^2.
\end{aligned}
\end{equation}
\end{lemma}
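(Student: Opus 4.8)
The plan is to diagonalize the entire inequality by expanding $g$ against the orthonormal eigenbasis $\{f_i\}$. Since $W$ is finite dimensional and $\{f_i\}_{i=1}^s$ is orthonormal, I write $g=\sum_{i=1}^s c_i f_i$ with $c_i=\langle g,f_i\rangle$, so that $|\langle g,f_i\rangle|^2=|c_i|^2$. Using $Lf_i=\gamma_i f_i$, orthonormality, and the self-adjointness of $L$ (which forces the $\gamma_i$ to be real), the two global quantities appearing in the statement become diagonal sums,
$$\langle g,g\rangle=\sum_{i=1}^s|c_i|^2,\qquad \langle g,Lg\rangle=\sum_{i=1}^s\gamma_i|c_i|^2,$$
with all cross terms vanishing.

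First I would substitute these expressions into the claimed inequality, reducing it to a purely numerical statement about the nonnegative reals $|c_i|^2$ and the ordered eigenvalues $\gamma_1\leqslant\cdots\leqslant\gamma_s$. Next I would handle the $k$ ``low'' terms on the right-hand side: the sum $\sum_{j=1}^k(\gamma_{k+1}-\gamma_j)|c_j|^2$ combines with $\sum_{i=1}^s\gamma_i|c_i|^2$ so that the contributions of the indices $1\leqslant j\leqslant k$ collapse to $\gamma_{k+1}\sum_{j=1}^k|c_j|^2$, which matches exactly the corresponding block on the left-hand side $\gamma_{k+1}\langle g,g\rangle$. After cancelling this common block, the inequality is equivalent to
$$\gamma_{k+1}\sum_{i=k+1}^s|c_i|^2\leqslant\sum_{i=k+1}^s\gamma_i|c_i|^2.$$

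Finally I would observe that this last inequality holds term by term: for every $i\geqslant k+1$ the ordering gives $\gamma_i\geqslant\gamma_{k+1}$, hence $\gamma_{k+1}|c_i|^2\leqslant\gamma_i|c_i|^2$ because $|c_i|^2\geqslant0$; summing over $i=k+1,\ldots,s$ completes the proof. There is no genuine obstacle here, only bookkeeping: one must track the conjugate-linearity convention so that $\langle g,Lg\rangle$ truly yields $\sum_i\gamma_i|c_i|^2$, and carry out the reindexing that isolates the tail $i\geqslant k+1$. It is worth noting that positive semidefiniteness ($\gamma_1\geqslant0$) is not actually used; only the monotone ordering of the eigenvalues is needed.
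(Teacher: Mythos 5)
Your proof is correct. The paper does not actually prove this lemma --- it cites it as Lemma~3.1 of Bauer--Lippner and uses it as a black box --- but your argument (expand $g$ in the eigenbasis, cancel the block of indices $j\leqslant k$, and use $\gamma_i\geqslant\gamma_{k+1}$ for the tail $i\geqslant k+1$) is the standard proof and is exactly what one would expect the cited source to contain; your remark that positive semidefiniteness is not needed is also accurate.
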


\

In this lemma, we choose $g=h_z, f_j=\phi_j$, and we can derive an inequality between $\lambda_{k+1}$ and $\sum_{j=1}^k\lambda_j$.
\begin{lemma}\label{lm5}
Let $\Omega$ be a finite subgraph of $\mathbb{Z}^d$. For any $1\leqslant k \leqslant |\Omega|$, $0<\alpha< 2$, and measurable set $B \subset [-\pi,\pi]^d$, we have
\begin{equation*}
\begin{aligned}
\lambda_{k+1}(|\Omega||B|-(2\pi)^d k) \leqslant |\Omega|\cdot\int_{z\in B}(\Phi(z))^{\frac{\alpha}{2}}dz - (2\pi)^d\sum_{j=1}^k\lambda_j
+|B|\cdot |\partial^\alpha \Omega|,
\end{aligned}
\end{equation*}
where $\Phi (z) = \sum_{i=1}^d(2-2\cos z_i)$, and $\lambda_{k+1}=0$ if $k=|\Omega|$.
\end{lemma}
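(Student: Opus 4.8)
The plan is to apply Lemma~\ref{lm4} at each fixed frequency $z$ and then integrate the resulting pointwise inequality over $B$. Concretely, I would take $L=L^\alpha_{\Omega}$ on the finite-dimensional Hermitian space $W=\mathbb{C}^\Omega$, which by Proposition~\ref{pro:c}(b) is self-adjoint and positive definite with eigenvalues $\lambda_1\leqslant\cdots\leqslant\lambda_{|\Omega|}$ and orthonormal eigenbasis $\{\phi_j\}$, and insert the test vector $g=h_z|_\Omega$ together with $f_j=\phi_j$. For $1\leqslant k\leqslant|\Omega|-1$ (where $\lambda_{k+1}$ is a genuine eigenvalue) Lemma~\ref{lm4} then gives
$$\lambda_{k+1}\langle h_z, h_z\rangle_{\Omega} \leqslant \langle h_z, L^\alpha_{\Omega} h_z\rangle_{\Omega} + \sum_{j=1}^k (\lambda_{k+1}-\lambda_j)\,|\langle h_z, \phi_j\rangle_{\Omega}|^2 .$$
By Lemma~\ref{lm3}(a) the left side is $\lambda_{k+1}|\Omega|$, so integrating $dz$ over $B$ produces the desired left-hand side $\lambda_{k+1}|\Omega||B|$.

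Next I would bound the two terms on the right after integration. For the quadratic form, since $L^\alpha_{\Omega}$ is positive definite the quantity $\langle h_z, L^\alpha_{\Omega} h_z\rangle_{\Omega}$ is real and nonnegative, hence equal to its own modulus, and Lemma~\ref{lm3}(b) yields $\langle h_z, L^\alpha_{\Omega} h_z\rangle_{\Omega}\leqslant (\Phi(z))^{\frac{\alpha}{2}}|\Omega|+|\partial^\alpha\Omega|$; integrating over $B$ contributes $|\Omega|\int_B(\Phi(z))^{\frac{\alpha}{2}}dz+|B|\,|\partial^\alpha\Omega|$. For the projection sum I use the Plancherel identity Lemma~\ref{lm2}(a) applied to the unit vector $\phi_j$, namely $\int_{[-\pi,\pi]^d}|\langle h_z,\phi_j\rangle_{\Omega}|^2\,dz=(2\pi)^d\langle\phi_j,\phi_j\rangle_{\Omega}=(2\pi)^d$, together with the fact that $B\subseteq[-\pi,\pi]^d$ and the integrand is nonnegative, to conclude $\int_B|\langle h_z,\phi_j\rangle_{\Omega}|^2\,dz\leqslant(2\pi)^d$. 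Because $j\leqslant k$ forces $\lambda_{k+1}-\lambda_j\geqslant 0$, each summand may be bounded above term by term, so the integrated projection sum is at most $(2\pi)^d\sum_{j=1}^k(\lambda_{k+1}-\lambda_j)$.

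Combining the two estimates and expanding $(2\pi)^d\sum_{j=1}^k(\lambda_{k+1}-\lambda_j)=(2\pi)^d k\,\lambda_{k+1}-(2\pi)^d\sum_{j=1}^k\lambda_j$, I would transpose the $(2\pi)^d k\,\lambda_{k+1}$ term to the left to factor out $\lambda_{k+1}\big(|\Omega||B|-(2\pi)^d k\big)$, which is exactly the claimed inequality. The one genuinely delicate point is the \emph{direction} of the estimate on the projection sum: discarding the Fourier mass of $|\langle h_z,\phi_j\rangle_{\Omega}|^2$ outside $B$ only preserves the inequality because the index restriction $j\leqslant k$ makes every coefficient $\lambda_{k+1}-\lambda_j$ nonnegative; with the opposite sign this step would reverse. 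The remaining value $k=|\Omega|$ is only a notational boundary, handled by the convention $\lambda_{k+1}=0$, and is not needed in the proof of Theorem~\ref{thm:main1}(a), where $k$ is a small fraction of $|\Omega|$.
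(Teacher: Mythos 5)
Your proposal is correct and follows essentially the same route as the paper: apply Lemma~\ref{lm4} with $g=h_z$, integrate over $B$, use Lemma~\ref{lm3} for the left side and the quadratic form, and enlarge $\int_B|\langle h_z,\phi_j\rangle_\Omega|^2\,dz$ to $(2\pi)^d$ via Lemma~\ref{lm2}(a), which is legitimate precisely because the coefficients $\lambda_{k+1}-\lambda_j$ are nonnegative. Your explicit remarks on the direction of that last estimate and on the realness of the quadratic form are points the paper leaves implicit, but the argument is the same.
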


\begin{proof}
In Lemma~\ref{lm4}, we choose
$$W = \mathbb{C}^\Omega, \quad L = L^\alpha_{\Omega}, \quad g = h_z, \quad \gamma_j = \lambda_j, \quad f_j = \phi_j.$$
From Proposition~\ref{pro:c}, we know the above choice satisfies the conditions of Lemma~\ref{lm4}. Integrating both sides of the inequality (5) yields
\begin{equation*}
\begin{aligned}
\lambda_{k+1}\int_{z\in B}\langle h_z,h_z \rangle_\Omega \leqslant \int_{z\in B}\langle h_z,L^\alpha_{\Omega}h_z \rangle_\Omega +
\sum_{j=1}^k(\lambda_{k+1}-\lambda_j)\int_{z\in B}|\langle h_z,\phi_j \rangle_\Omega|^2.
\end{aligned}
\end{equation*}
By Lemma~\ref{lm2} and Lemma~\ref{lm3}, the left-hand side is equal to $\lambda_{k+1}|B||\Omega|$, while
\begin{equation*}
\begin{aligned}
&\quad\quad\quad\quad\quad\quad\quad\quad\quad\ \text{the right-side hand}\\
&\leqslant \int_{z\in B}((\Phi (z))^\frac{\alpha}{2}|\Omega|+|\partial^\alpha \Omega|)dz +
(2\pi)^d\sum_{j=1}^k(\lambda_{k+1}-\lambda_j)\langle \phi_j,\phi_j \rangle_\Omega\\
&= |\Omega|\cdot\int_{z\in B}(\Phi(z))^{\frac{\alpha}{2}}dz + |B|\cdot |\partial^\alpha \Omega|+(2\pi)^d\sum_{j=1}^k(\lambda_{k+1}-\lambda_j),
\end{aligned}
\end{equation*}
which completes the proof of this lemma.
\end{proof}

\

\tm\label{thm2} Let $\Omega$ be a finite subgraph of $\mathbb{Z}^d$. For $1\leqslant k \leqslant \min\{1,\frac{V_d}{2^d}\}|\Omega|$ and $0<\alpha< 2$, we obtain the upper bound estimate:
$$\frac{1}{k}\sum_{j=1}^k\lambda_j \leqslant (2\pi)^\alpha\frac{d}{d+\alpha}(\frac{k}{V_d|\Omega|})^{\frac{\alpha}{d}}+\frac{|\partial^\alpha \Omega|}{|\Omega|}.$$
For $1\leqslant k\leqslant\min\{1,\frac{V_d}{2^{d+1}}\}|\Omega|$, there holds
$$ \lambda_{k+1} \leqslant (2\pi)^\alpha\frac{d\cdot2^{\frac{d+\alpha}{d}}}{d+\alpha}(\frac{k}{V_d|\Omega|})^{\frac{\alpha}{d}}+2\frac{|\partial^\alpha \Omega|}{|\Omega|},$$
where $\lambda_{k+1}=0$ if $k=|\Omega|$.

\tmd

\begin{proof}

\

In Lemma~\ref{lm5}, for $1\leqslant k \leqslant \min\{1,\frac{V_d}{2^d}\}|\Omega|$, we choose the measurable set B as a ball of radius $2\pi(\frac{k}{V_d|\Omega|})^{\frac{1}{d}} $ centered at the origin in $\mathbb{R}^d$, so that the radius $R=2\pi(\frac{k}{V_d|\Omega|})^{\frac{1}{d}}
 \leqslant \pi$ and $|B| = \frac{k(2\pi)^d}{|\Omega|}$. Thus we derive an inequality:
\begin{equation}
\begin{aligned}
 (2\pi)^d \sum_{j=1}^k\lambda_j \leqslant |\Omega|\cdot\int_{z\in B}(\Phi(z))^{\frac{\alpha}{2}}dz +|B|\cdot |\partial^\alpha \Omega|.
\end{aligned}
\end{equation}
In the rest of the proof, we just need to estimate $\int_{z\in B}(\Phi(z))^{\frac{\alpha}{2}}dz$.

We know by calculation,
\begin{equation*}
\begin{aligned}
(\Phi(z))^{\frac{\alpha}{2}}= (\sum_{i=1}^d(2-2\cos z_i))^{\frac{\alpha}{2}}
=(\sum_{i=1}^d4\sin^2 \frac{z_i}{2})^{\frac{\alpha}{2}}
\leqslant (\sum_{i=1}^dz^2_i)^{\frac{\alpha}{2}} = |z|^\alpha.
\end{aligned}
\end{equation*}
This yields that
\begin{equation*}
\begin{aligned}
 \int_{z\in B}(\Phi(z))^{\frac{\alpha}{2}}dz \leqslant \int_{z\in B}|z|^\alpha= dV_d\int^R_0 r^\alpha\cdot r^{d-1}dr = \frac{dV_d}{d+\alpha}R^{d+\alpha}.
\end{aligned}
\end{equation*}
Applying the above inequality into (6), we obtain
\begin{equation*}
\begin{aligned}
\frac{1}{k}\sum_{j=1}^k\lambda_j&\leqslant \frac{dV_d}{d+\alpha}\frac{|\Omega|R^{d+\alpha}}{k(2\pi)^d}+ \frac{|\partial^\alpha \Omega|}{|\Omega|}\\
&=(2\pi)^\alpha\frac{d}{d+\alpha}(\frac{k}{V_d|\Omega|})^{\frac{\alpha}{d}}+\frac{|\partial^\alpha \Omega|}{|\Omega|}.
\end{aligned}
\end{equation*}

Similarly, if $1\leqslant k\leqslant\min\{1,\frac{V_d}{2^{d+1}}\}|\Omega|$, we choose $R'=2^{\frac{1}{d}}R$ , which implies $R' = 2\pi(\frac{2k}{V_d|\Omega|})^{\frac{1}{d}} \leqslant \pi$ and $|B| = \frac{2k(2\pi)^d}{|\Omega|}$. Then we repeat the above proof and ignore the effect of $\sum_{j=1}^k\lambda_j$, obtaining that
\begin{equation*}
\begin{aligned}
k(2\pi)^d\lambda_{k+1}&\leqslant |\Omega|\cdot\int_{z\in B}(\Phi(z))^{\frac{\alpha}{2}}dz +|B|\cdot |\partial^\alpha \Omega|\\
&\leqslant |\Omega|\cdot \frac{dV_d}{d+\alpha}R^{d+\alpha}+ \frac{2k(2\pi)^d|\partial^\alpha \Omega|}{|\Omega|},\\
\end{aligned}
\end{equation*}
which implies the result
$$ \lambda_{k+1} \leqslant (2\pi)^\alpha\frac{d\cdot2^{\frac{d+\alpha}{d}}}{d+\alpha}(\frac{k}{V_d|\Omega|})^{\frac{\alpha}{d}}+2\frac{|\partial^\alpha \Omega|}{|\Omega|}.$$
\end{proof}

\begin{remark}\label{rm1}
Note that as $\alpha\rightarrow2$, the fractional Laplacian tends to the Laplacian, and the upper bound estimate in Theorem~\ref{thm2} also holds, which is the result in \cite{BL21}.
\end{remark}
\

\subsection{Lower bound}

\

\

To prove the lower bound estimate ($(b)$ of Theorem~\ref{thm:main1}), we follow the method by Li and Yau \cite{1983On}. In the following part we prove a modification of
Lemma~{1} from \cite{1983On}, and with Lemma~\ref{lm2} we complete the proof.

\begin{lemma}[Modification of
Lemma~{1} from \cite{1983On}]\label{lm6}
Let $F$ be a real-valued function in $\mathbb{R}^d$, satisfying $0\leqslant F\leqslant M$ and
$$\int_{[-\pi,\pi]^d}F(z)dz\geqslant K.$$
Assume $0<\alpha< 2$, and
$$(\frac{K}{MV_d})^{\frac{1}{d}}\leqslant a=2^{1-\frac{1}{\alpha}}\sqrt{3} <\pi.$$
Then we have the integral inequality
$$\int_{[-\pi,\pi]^d}(\Phi(z))^{\frac{\alpha}{2}}F(z)dz\geqslant \frac{dK}{d+\alpha}(\frac{K}{MV_d})^{\frac{\alpha}{d}}-(\frac{1}{12})^{\frac{\alpha}{2}}\cdot \frac{dK}{d+2\alpha}(\frac{K}{MV_d})^{\frac{2\alpha}{d}},$$
where $\Phi (z) = \sum_{i=1}^d(2-2\cos z_i)$.
\end{lemma}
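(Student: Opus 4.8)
The plan is to minorize $(\Phi(z))^{\alpha/2}$ by a radially symmetric function and then run a bathtub/rearrangement argument, exactly as outlined before the statement. Set $R=\left(\frac{K}{MV_d}\right)^{1/d}$, so that by hypothesis $R\le a=2^{1-1/\alpha}<\pi$ and $MV_dR^d=K$. I would introduce the scalar profile $g(r)=r^{\alpha}-\left(\frac{1}{12}\right)^{\alpha/2}r^{2\alpha}$ and define the radial function $\varphi(z)=g(|z|)$ for $|z|\le R$ and $\varphi(z)=g(R)$ for $|z|>R$. The reason for this choice is that $M\int_{B_R}\varphi$ reproduces precisely the right-hand side of the lemma, so the problem reduces to (i) verifying $0\le\varphi(z)\le(\Phi(z))^{\alpha/2}$ on $[-\pi,\pi]^d$, and (ii) showing that $M\mathbf{1}_{B_R}$ minimizes $\int\varphi F$ over admissible $F$.

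For the pointwise minorization I would first prove the scalar inequality $2-2\cos t\ge t^{2}-\frac{t^{4}}{12}$ (immediate from differentiating three times, since the third derivative $2(t-\sin t)$ is nonnegative), and combine it with $\sum_i z_i^{4}\le\big(\sum_i z_i^{2}\big)^{2}=|z|^{4}$ to obtain $\Phi(z)\ge|z|^{2}-\frac{|z|^{4}}{12}$. Raising to the power $\tfrac\alpha2$ and using the subadditivity $(1-u)^{\alpha/2}\ge 1-u^{\alpha/2}$ at $u=|z|^{2}/12\in[0,1]$ (valid since $r\mapsto r^{\alpha/2}$ is concave and vanishes at $0$) yields $(\Phi(z))^{\alpha/2}\ge|z|^{\alpha}-\left(\frac1{12}\right)^{\alpha/2}|z|^{2\alpha}=g(|z|)$ whenever $|z|\le\sqrt{12}$; as $R\le a<\sqrt{12}$ this covers $B_R$ and simultaneously shows $g\ge 0$ and that $g$ is increasing on $[0,R]$ (its critical point is $2^{1-1/\alpha}\sqrt3=a\sqrt3>R$), so $\sup_{B_R}\varphi=g(R)$. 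The outside estimate $\varphi\equiv g(R)\le(\Phi)^{\alpha/2}$ on $B_R^{c}$ is the delicate point: I would argue that $\min_{\{|z|\ge R\}\cap[-\pi,\pi]^d}\Phi=2-2\cos R$. Shrinking $z$ radially toward the origin decreases every $2-2\cos z_i$ (each increasing on $[0,\pi]$), so the minimum is attained on the sphere $|z|=R$; and on that sphere concentrating two positive coordinates into one decreases $\Phi$, because $(2-2\cos t)/t^{2}$ is decreasing, giving $2-2\cos\sqrt{a^{2}+b^{2}}\le(2-2\cos a)+(2-2\cos b)$. Hence $(\Phi)^{\alpha/2}\ge(2-2\cos R)^{\alpha/2}\ge\big(R^{2}-\frac{R^4}{12}\big)^{\alpha/2}\ge g(R)$ off $B_R$.

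With the minorant established, the rearrangement step is short. Abbreviating $s=g(R)$ and writing any admissible $F$ as a perturbation of $M\mathbf{1}_{B_R}$,
\begin{align*}
\int\varphi F-M\int_{B_R}\varphi=\int_{B_R}\varphi(F-M)+\int_{B_R^{c}}\varphi F\ \ge\ s\Big(\int F-M|B_R|\Big)\ \ge\ 0,
\end{align*}
using $\varphi\le s$ and $F-M\le 0$ on $B_R$, $\varphi\ge s$ and $F\ge 0$ on $B_R^{c}$, and $\int F\ge K=M|B_R|$. Chaining $\int(\Phi)^{\alpha/2}F\ge\int\varphi F\ge M\int_{B_R}\varphi$ and evaluating the last integral in polar coordinates, $M\int_{B_R}\big(|z|^{\alpha}-\left(\frac1{12}\right)^{\alpha/2}|z|^{2\alpha}\big)\,dz=MdV_d\big(\frac{R^{d+\alpha}}{d+\alpha}-\left(\frac1{12}\right)^{\alpha/2}\frac{R^{d+2\alpha}}{d+2\alpha}\big)$, and then substituting $K=MV_dR^{d}$ and $R^{\alpha}=\left(\frac{K}{MV_d}\right)^{\alpha/d}$, recovers the stated bound exactly. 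I expect the main obstacle to be the outside estimate, namely pinning down $\min_{|z|\ge R}\Phi$ over the cube through the coordinate concentration argument; the interior minorization and the final bathtub comparison are then routine once the two scalar inequalities ($2-2\cos t\ge t^2-t^4/12$ and the subadditivity of $r\mapsto r^{\alpha/2}$) are in place.
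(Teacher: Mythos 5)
Your proof is correct and follows essentially the same route as the paper: the same minorant $g(r)=r^{\alpha}-(\tfrac{1}{12})^{\alpha/2}r^{2\alpha}$ obtained from $2-2\cos t\geqslant t^{2}-\tfrac{t^{4}}{12}$ together with $(1-x)^{\alpha/2}\geqslant 1-x^{\alpha/2}$, capped to a radially nondecreasing $\varphi$, followed by the identical bathtub comparison against $M\mathbf{1}_{B_R}$ and the same polar-coordinate evaluation. The only divergence is the exterior region, where your choice (cap at radius $R$ with value $g(R)$, justified via $\min_{|z|\geqslant R}\Phi = 2-2\cos R$ by coordinate concentration) is actually more careful than the paper's, which caps at radius $a$ with the value $\tfrac14(12)^{\alpha/2}$ (the global maximum of $g$, attained at $a\sqrt3>a$) and asserts $(\Phi(z'))^{\alpha/2}\geqslant\tfrac14(12)^{\alpha/2}$ on $|z'|=a$ --- an inequality that does not follow from its interior estimate and can fail for $\alpha$ near $2$; this is harmless for the theorem since only $\int_{B_R}\varphi$ enters the final bound, but your construction sidesteps it cleanly.
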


\begin{proof}

\

Without loss of generality, we assume that $\int_{[-\pi,\pi]^d}F(z)dz= K$. Let us prove the lemma in two steps.

Step 1: At first we are going to construct a radially symmetric function $\varphi(z): [-\pi,\pi]^d\rightarrow \mathbb{R}$ such that
$0 \leqslant \varphi(z)\leqslant(\Phi(z))^{\frac{\alpha}{2}}$. By the Taylor expansion, we observe that if $|z|\leqslant a=2^{1-\frac{1}{\alpha}}\sqrt{3} <\pi$, there holds
\begin{equation*}
\begin{aligned}
&(\Phi(z))^{\frac{\alpha}{2}}= (\sum_{i=1}^d(2-2\cos z_i))^{\frac{\alpha}{2}}\\
\geqslant &(\sum_{i=1}^d(z_i^2-\frac{1}{12}z_i^4))^{\frac{\alpha}{2}}\geqslant ((|z|^2-\frac{1}{12}|z|^4))^{\frac{\alpha}{2}}\\
= &|z|^\alpha(1-\frac{1}{12}|z|^2)^{\frac{\alpha}{2}}\geqslant |z|^\alpha(1-(\frac{1}{12})^{\frac{\alpha}{2}}|z|^\alpha),
\end{aligned}
\end{equation*}
where the last step follows from $(1-x)^{\frac{\alpha}{2}}\geqslant 1-x^{\frac{\alpha}{2}}$ for any $0\leqslant x\leqslant1$. For $|z|\leqslant a $, one easily sees that
$|z|^\alpha(1-(\frac{1}{12})^{\frac{\alpha}{2}}|z|^\alpha)$ is increasing with respect to $|z|$. Thus we define
\begin{equation*}
\varphi(z) = \left\{
\begin{aligned}
& |z|^\alpha - (\frac{1}{12})^{\frac{\alpha}{2}}|z|^{2\alpha} \quad &,\ |z|\leqslant a,\\
& \frac{1}{4}(12)^{\frac{\alpha}{2}} &,\ |z|> a.\\
\end{aligned}
\right.
\end{equation*}
It is clear that $\varphi(z)$ is increasing radially and $0 \leqslant \varphi(z)\leqslant(\Phi(z))^{\frac{\alpha}{2}}$ when $|z|\leqslant a$. If $|z|>a$ and $z\in [-\pi,\pi]^d$, we reduce some $|z_i|$ and choose $|z'|=a<|z|$ which satisfies $|z'_i|\leqslant |z_i|$. Hence, from the expression of $(\Phi(z'))^{\frac{\alpha}{2}}$, we know that
$$(\Phi(z))^{\frac{\alpha}{2}}\geqslant (\Phi(z'))^{\frac{\alpha}{2}}\geqslant \varphi(z')=\frac{1}{4}(12)^{\frac{\alpha}{2}} = \varphi(z).$$

\

Step 2: With the function $\varphi(z)$, we can estimate the lower bound of $\int_{[-\pi,\pi]^d}(\Phi(z))^{\frac{\alpha}{2}}F(z)dz$. Since $F\geqslant 0$, we have
\begin{equation}
\begin{aligned}
\int_{[-\pi,\pi]^d}(\Phi(z))^{\frac{\alpha}{2}}F(z)dz \geqslant \int_{[-\pi,\pi]^d}\varphi(z)F(z)dz.
\end{aligned}
\end{equation}

Choose the constant $R = (\frac{K}{MV_d})^{\frac{1}{d}}\leqslant a=2^{1-\frac{1}{\alpha}}\sqrt{3} <\pi$ to satisfy that $MR^dV_d = K$, and we define
\begin{equation*}
\widetilde{F}(z) = \left\{
\begin{aligned}
& M \ &,\ |z|\leqslant R,\\
& \ 0 &,\ |z|> R.\\
\end{aligned}
\right.
\end{equation*}
Thus,
$$\int_{[-\pi,\pi]^d}\widetilde{F}(z)dz= \int_{[-\pi,\pi]^d}F(z)dz= K.$$
For any function $F$ which satisfies the conditions $0\leqslant F\leqslant M$ and $\int_{[-\pi,\pi]^d}F(z)dz= K$, we claim that $\widetilde{F}$ minimizes the integral
$\int_{[-\pi,\pi]^d}\varphi(z)F(z)dz$ since $\varphi(z)$ is monotonic and radially symmetric. The strict proof is as follows:
\begin{equation*}
\begin{aligned}
&\int_{[-\pi,\pi]^d}\varphi(z)(F(z)-\widetilde{F}(z))dz \\
=& \int_{[-\pi,\pi]^d\setminus B_R}\varphi(z)F(z)dz -\int_{B_R}\varphi(z)(M-F(z))dz\\
\geqslant& \varphi(R)\int_{[-\pi,\pi]^d\setminus B_R}F(z)dz-\varphi(R)\int_{B_R}(M-F(z))dz\\
= &\varphi(R)(\int_{[-\pi,\pi]^d}F(z)dz-\int_{B_R}Mdz) = 0.
\end{aligned}
\end{equation*}
Thus we have the estimate
\begin{equation*}
\begin{aligned}
&\int_{[-\pi,\pi]^d}(\Phi(z))^{\frac{\alpha}{2}}F(z)dz
\geqslant \int_{[-\pi,\pi]^d}\varphi(z)F(z)dz \geqslant M\int_{B_R}\varphi(z)dz\\
=&M \int_{B_R}(|z|^\alpha - (\frac{1}{12})^{\frac{\alpha}{2}}|z|^{2\alpha})dz
= MdV_d \int_0^R(r^\alpha - (\frac{1}{12})^{\frac{\alpha}{2}}r^{2\alpha})r^{d-1}dr\\
=&MdV_d(\frac{R^{d+\alpha}}{d+\alpha}-(\frac{1}{12})^{\frac{\alpha}{2}}\frac{R^{d+2\alpha}}{d+2\alpha})
=\frac{dK}{d+\alpha}(\frac{K}{MV_d})^{\frac{\alpha}{d}}-(\frac{1}{12})^{\frac{\alpha}{2}}\cdot \frac{dK}{d+2\alpha}(\frac{K}{MV_d})^{\frac{2\alpha}{d}}.
\end{aligned}
\end{equation*}
\end{proof}

\tm\label{thm3} Let $\Omega$ be a finite subgraph of $\mathbb{Z}^d$. For $1\leqslant k\leqslant\min\{1,(\frac{2^{1-\frac{1}{\alpha}}\sqrt{3}}{2\pi})^d V_d\}|\Omega|$ and $0<\alpha< 2$, we obtain the lower bound estimate
$$\lambda_k(\Omega)\geqslant \frac{1}{k}\sum_{j=1}^k \lambda_j(\Omega)
 \geqslant (2\pi)^\alpha\frac{d}{d+\alpha}(\frac{k}{V_d|\Omega|})^{\frac{\alpha}{d}}-(2\pi)^{2\alpha}(\frac{1}{12})^{\frac{\alpha}{2}}\frac{d}{d+2\alpha}(\frac{k}{V_d|\Omega|})^
 {\frac{2\alpha}{d}}.$$

\tmd

\begin{proof}

\

The standardized eigenfunctions $\{\phi_j\}_{j=1}^k$ form an orthogonal basis in $l^2(\Omega)$. We define $P_j$ the projection operator on the space spanned by $\phi_j$, and $P$ the projection operator on the space spanned by $\{\phi_j\}_{j=1}^k$. Let $\|\cdot\|$ be the $l^2$ norm on $\Omega$ and $h_z(x) = e^{i\langle z,x \rangle}$ as before. To apply Lemma~\ref{lm6}, let
$$F_j(z) = \langle P_jh_z, P_jh_z\rangle_\Omega = \|P_jh_z\|^2,$$
$$F(z) = \sum_{j=1}^kF_j(z) = \|Ph_z\|^2.$$
By Lemma~\ref{lm2}, we have
$$F(z) = \|Ph_z\|^2  = \sum_{j=1}^k|\langle \phi_j, h_z\rangle_\Omega|^2\leqslant \|h_z\|^2 =|\Omega|, $$
$$\int_{[-\pi,\pi]^d}F(z)dz=\sum_{j=1}^k\int_{[-\pi,\pi]^d}|\langle \phi_j, h_z\rangle_\Omega|^2dz= (2\pi)^d\sum_{j=1}^k\langle \phi_j, \phi_j\rangle_\Omega
=k(2\pi)^d,$$
\begin{equation*}
\begin{aligned}
\int_{[-\pi,\pi]^d}(\Phi(z))^{\frac{\alpha}{2}}F(z)dz &= \sum_{j=1}^k\int_{[-\pi,\pi]^d}(\Phi(z))^{\frac{\alpha}{2}}|\langle \phi_j, h_z\rangle_\Omega|^2dz\\
&=(2\pi)^d\sum_{j=1}^k\langle \phi_j, L^\alpha_{\Omega}\phi_j\rangle_\Omega =(2\pi)^d \sum_{j=1}^k\lambda_j.
\end{aligned}
\end{equation*}
Using Lemma~\ref{lm6} for $M=|\Omega|, K = k(2\pi)^d$, we get the result
$$\lambda_k(\Omega)\geqslant \frac{1}{k}\sum_{j=1}^k \lambda_j(\Omega)
 \geqslant (2\pi)^\alpha\frac{d}{d+\alpha}(\frac{k}{V_d|\Omega|})^{\frac{\alpha}{d}}-(2\pi)^{2\alpha}(\frac{1}{12})^{\frac{\alpha}{2}}\frac{d}{d+2\alpha}(\frac{k}{V_d|\Omega|})^
 {\frac{2\alpha}{d}}.$$
\end{proof}

\begin{remark}
As $\alpha \rightarrow 2$, the fractional Laplacian tends to the Laplacian, and the above lower bound estimate also holds, which is the result proved in \cite{BL21}.
\end{remark}

\textbf{Conflicts of Interests.} The authors declared no potential conflicts of interests with respect to this article.

\textbf{Ethics Approval.}
This study did not involve any human participants or animals, and therefore, ethical approval was not required.

\textbf{Data Availability.}
Data availability is not applicable to this article as no new data were created or analyzed in this study.
\bigskip
\bigskip

\bigskip

\bibliographystyle{alpha}
\bibliography{ckwx}

\end{document}